\documentclass[12pt]{article}

\usepackage[margin=1in]{geometry}
\usepackage{amsmath,amssymb,amsthm}
\usepackage{mathtools}
\usepackage{natbib}
\usepackage{booktabs}
\usepackage{array}
\usepackage{float}
\usepackage{hyperref}
\usepackage{cleveref}

\newtheorem{theorem}{Theorem}[section]
\newtheorem{proposition}[theorem]{Proposition}

\newtheorem{corollary}[theorem]{Corollary}
\newtheorem{remark}{Remark}[section]

\newcommand{\R}{\mathbb{R}}
\newcommand{\E}{\mathbb{E}}
\newcommand{\Var}{\mathrm{Var}}
\newcommand{\KL}{\mathrm{KL}}
\newcommand{\abs}[1]{\lvert #1 \rvert}
\newcommand{\piH}{\pi_H}
\newcommand{\tcrit}{t_{\mathrm{crit}}}

\title{Horseshoe Priors and MDP}

\author{
	\makebox[.4\linewidth]{Nicholas G.\ Polson}\\\textit{Booth School of Business}\\\textit{University of Chicago}\\\and
	\makebox[.4\linewidth]{Vadim Sokolov}\\\textit{Dept.\ of Systems Engineering}\\\textit{and Operations Research}\\\textit{George Mason University}\\\and
	\makebox[.4\linewidth]{Daniel Zantedeschi}\\\textit{Muma College of Business}\\\textit{University of South Florida}
}

\date{March 2026}

\begin{document}
\maketitle

\begin{abstract}
\citet{carvalho2010horseshoe} established two foundational theorems for the horseshoe prior: tight two-sided logarithmic bounds on the marginal density near the origin (Theorem~1.1), and a super-efficient rate of convergence of the Bayes predictive density to the true sampling density in sparse situations (Theorem~2). The ``Shrink Globally, Act Locally'' paper \citep{polson2010shrink} formalised necessary and sufficient conditions on the prior's behaviour at the origin for sparsity adaptation as $p \to \infty$. We show that these results are not merely descriptive properties of the horseshoe---they are the finite-sample precursors to the asymptotic moderate deviation principle (MDP) of \citet{datta2026newlook}. The log-pole singularity $\piH(\theta) \asymp -\log\abs{\theta}$ is precisely the origin integrability boundary that selects the MDP threshold $\tcrit = \sqrt{\log(\pi n/2)}$; super-efficiency below the threshold and tail robustness above it together produce the ABOS Bayes risk $p_0 \log(p/p_0)/n$; and the Clarke--Barron information-theoretic asymptotics of Bayes methods provide the unifying framework in which all three results are faces of a single logarithmic budget principle.
\end{abstract}

\noindent\textbf{Keywords:} Horseshoe prior, log-pole singularity, super-efficiency, KL risk, origin integrability, moderate deviation principle, sparse testing, ABOS, Clarke--Barron.

\section{Introduction}\label{sec:intro}

The horseshoe prior for sparse normal means has, since its introduction by \citet{carvalho2009handling}, been understood to possess two structural properties that set it apart from other continuous shrinkage priors: an \emph{infinite spike at zero}, where the marginal density $\piH(\theta) \to \infty$ as $\theta \to 0$, unlike the Lasso, ridge, or Student-$t$ priors which have bounded density at the origin; and \emph{heavy Cauchy-like tails}, where for large $\abs{\theta}$, $\piH(\theta)$ decays like $\abs{\theta}^{-2}$, leaving large signals unshrunk. These two features have been exploited computationally, empirically, and theoretically, but their \emph{asymptotic interpretation} in relation to hypothesis testing and Bayes risk calibration has not been fully worked out. The purpose of this paper is to close that gap by showing that the Polson--Scott bounds are, in a precise sense, the finite-sample expressions of the MDP optimality conditions established by \citet{datta2026newlook}.

The horseshoe prior emerged from a line of work on continuous shrinkage alternatives to spike-and-slab priors \citep{mitchell1988bayesian}. The spike-and-slab prior places a point mass at zero mixed with a continuous slab distribution, achieving exact sparsity but at substantial computational cost in high dimensions. \citet{carvalho2009handling} proposed the horseshoe as a continuous alternative that mimics the spike-and-slab's behaviour through the scale-mixture representation $\theta_i \mid \lambda_i, \tau \sim N(0, \lambda_i^2 \tau^2)$ with $\lambda_i \sim C^+(0,1)$, where the half-Cauchy prior on the local scale~$\lambda_i$ generates both the infinite spike and the heavy tails. The Bayesian Lasso \citep{park2008bayesian}, which uses an exponential (equivalently, Laplace) prior on~$\theta_i$, had earlier been shown to have bounded density at zero, leading to over-shrinkage of large signals. The horseshoe corrected this deficiency while maintaining the computational tractability of continuous priors.

The theoretical programme for the horseshoe developed in three stages. First, \citet{carvalho2010horseshoe} established the tight log-pole bounds on the marginal density and the super-efficiency theorem for the KL risk of the Bayes predictive, providing the first rigorous evidence that the horseshoe's qualitative behaviour (spike and heavy tails) translated into quantitative optimality. Second, \citet{polson2010shrink} characterised the necessary and sufficient conditions on the prior's behaviour at the origin for near-oracle risk in the sparse normal means problem, showing that the logarithmic pole is the precise singularity level separating priors that are too weak (bounded density) from those that are too strong (non-integrable power poles). Third, the posterior concentration theory of \citet{vanderpas2014horseshoe} and \citet{vanderpas2016conditions} established that the horseshoe achieves the minimax rate $(p_0/p)\log(p/p_0)$ for estimating nearly black vectors, and \citet{datta2013asymptotic} proved that the horseshoe achieves asymptotic Bayes optimality under sparsity (ABOS) in the multiple testing framework of \citet{bogdan2011asymptotic}. Further developments include the horseshoe+ estimator \citep{bhadra2017horseshoe}, which strengthens the pole at zero by placing a half-Cauchy hyperprior on the local scale's own scale parameter; the Dirichlet--Laplace prior \citep{bhattacharya2015dirichlet}, which achieves a log-pole through a different mixing mechanism; and the asymptotic optimality results of \citet{ghosh2017asymptotic} for one-group shrinkage priors in high-dimensional problems. The sparsity information framework of \citet{piironen2017sparsity} provides practical guidance on choosing the global scale~$\tau$ to encode prior information about the expected number of signals, connecting the theoretical sparsity parameter $p_0/p$ to a user-specified quantity.

Despite this extensive theory, the connection between the finite-sample Polson--Scott bounds and the asymptotic testing framework remained implicit. The moderate deviation principle (MDP) of \citet{datta2026newlook} provides the missing link. The MDP establishes that the Bayes-risk-optimal threshold for sparse testing lies at the moderate deviation scale $\sqrt{\log n}$---intermediate between the CLT scale $O(1)$ and the Bonferroni large deviation scale $\sqrt{2\log p}$---and that the exact threshold constant $\tcrit = \sqrt{\log(\pi n/2)}$ depends on the prior's behaviour at the origin. We show that each of the Polson--Scott bounds maps directly onto a component of this MDP optimality.

The contributions of this paper are as follows. First, we show that the log-pole singularity $\piH(\theta) \asymp -\log\abs{\theta}$ from \citet{carvalho2010horseshoe} is the origin integrability boundary: it is the strongest possible singularity at zero for which the prior remains normalisable and the Bayes risk near zero remains finite (\Cref{sec:channel1}). Second, we demonstrate that the super-efficiency theorem is the per-coordinate manifestation of the MDP detection zone: the horseshoe achieves KL risk $O(\tau^4)$ for coordinates below the MDP threshold and $O(1/n)$ above it, and the threshold $\tcrit$ is the exact equiboundary (\Cref{sec:channel2}). Third, we identify the Clarke--Barron information-theoretic asymptotics as the unifying framework: the ``logarithmic budget'' $p_0 \log n / n$ arises because each signal coordinate contributes $\log n / n$ to the cumulative KL risk while null coordinates contribute zero due to super-efficiency (\Cref{sec:channel3}). Fourth, we derive the $\kappa$-scale representation and show that the $\mathrm{Beta}(1/2,1/2)$ distribution on the shrinkage weight is the distributional encoding of the MDP equiboundary (\Cref{sec:kappa}).

The structure of the argument is as follows. \Cref{sec:ps-bounds} reviews the four key Polson--Scott bounds. \Cref{sec:mdp} presents the MDP framework of \citet{datta2026newlook}. \Cref{sec:connections} develops the connections in three channels. \Cref{sec:kappa} presents a unified view through the shrinkage weight~$\kappa$. \Cref{sec:abos} derives the full ABOS property and compares the horseshoe and horseshoe+ priors. \Cref{sec:tau} addresses the calibration of the global shrinkage parameter~$\tau$. \Cref{sec:sparsity} connects to the statistical sparsity framework of \citet{mccullagh2018statistical} and extends the analysis to sparse factor models. \Cref{sec:simulations} presents simulation evidence. \Cref{sec:hierarchy} establishes the precise hierarchy of bounds. \Cref{sec:discussion} discusses implications for prior design, practical recommendations, and open problems.

\section{The Polson--Scott Bounds}\label{sec:ps-bounds}

We collect four results from the Polson--Scott programme that, taken together, characterise the horseshoe prior's behaviour from the density level to the L\'evy measure level. Each result has a direct MDP counterpart developed in \Cref{sec:connections}.

\subsection{The Tight Log-Pole Bounds}\label{sec:logpole}

The univariate horseshoe marginal density $\piH(\theta)$---obtained by integrating out the local scale $\lambda \sim C^+(0,1)$ in the model $\theta \mid \lambda, \tau \sim N(0, \lambda^2 \tau^2)$---has no closed form. Setting $\tau = 1$ for notational simplicity (the general case follows by rescaling), the marginal density is:
\begin{equation}\label{eq:marginal-integral}
  \piH(\theta) = \int_0^\infty \frac{1}{\sqrt{2\pi}\,\lambda}\exp\!\left(-\frac{\theta^2}{2\lambda^2}\right) \cdot \frac{2}{\pi(1+\lambda^2)}\, d\lambda.
\end{equation}
The integrand is a product of the Gaussian kernel $N(\theta; 0, \lambda^2)$ and the half-Cauchy density $C^+(0,1)$ evaluated at~$\lambda$. The integral cannot be evaluated in closed form, but its behaviour near $\theta = 0$ and for large $\abs{\theta}$ can be extracted by asymptotic analysis. Near $\theta = 0$, the Gaussian kernel $N(\theta; 0, \lambda^2) \approx (2\pi\lambda^2)^{-1/2}$ is nearly constant for $\lambda \gg \abs{\theta}$, so the integral is dominated by the region $\lambda \gg \abs{\theta}$ where the half-Cauchy density contributes $\sim \lambda^{-2}$. The resulting integral $\int_{\abs{\theta}}^\infty \lambda^{-3}\, d\lambda \sim \theta^{-2}$ would produce a power-law pole, but the Gaussian kernel's decay for $\lambda \ll \abs{\theta}$ and the half-Cauchy's decay for $\lambda \gg 1$ temper this into a logarithmic pole. For large $\abs{\theta}$, the Gaussian kernel concentrates at $\lambda \approx \abs{\theta}$ and the half-Cauchy tail $\sim \lambda^{-2}$ dominates, giving the $\theta^{-2}$ tail.

The fundamental result of \citet{carvalho2010horseshoe} makes this precise:

\begin{theorem}[Carvalho, Polson, Scott 2010]\label{thm:cps}
Let $K = (2\pi^3)^{-1/2}$. The univariate horseshoe density satisfies:
\begin{enumerate}
  \item[(a)] $\lim_{\theta \to 0} \piH(\theta) = \infty$.
  \item[(b)] For $\theta \neq 0$:
  \begin{equation}\label{eq:logpole-bounds}
    \frac{K}{2} \log\!\left(1 + \frac{4}{\theta^2}\right)
    < \piH(\theta)
    < K \log\!\left(1 + \frac{2}{\theta^2}\right).
  \end{equation}
\end{enumerate}
\end{theorem}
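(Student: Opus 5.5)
Our plan is to reduce the marginal density to a one-dimensional integral over the shrinkage scale and then sandwich an elementary function between exponential-type bounds.

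\textbf{Step 1: change of variables.} Substitute $u = 1/\lambda^2$ in \eqref{eq:marginal-integral}, so that $d\lambda = -\tfrac12 u^{-3/2}\,du$ and $\lambda^{-1} = u^{1/2}$. This gives
\[
\piH(\theta) = \frac{1}{\sqrt{2\pi^3}} \int_0^\infty \frac{e^{-\theta^2 u/2}}{1+u}\,du = K\, e^{\theta^2/2} E_1\!\left(\theta^2/2\right),
\]
where $E_1(x)=\int_x^\infty e^{-t}t^{-1}\,dt$ is the exponential integral. The second equality follows from the substitution $t = (1+u)\theta^2/2$. The constant $K=(2\pi^3)^{-1/2}$ thus appears naturally.

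\textbf{Step 2: part (a).} As $x\to 0$ we have $E_1(x) = -\gamma - \log x + O(x)$. Alternatively, monotone convergence gives $\int_0^\infty (1+u)^{-1}\,du = \infty$. Either way, $\piH(\theta)\to\infty$ as $\theta\to 0$.

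\textbf{Step 3: part (b).} We invoke the classical sharp bounds for the exponential integral, valid for $x>0$:
\[
\tfrac12 \log\!\left(1+\tfrac{2}{x}\right) < e^{x}E_1(x) < \log\!\left(1+\tfrac{1}{x}\right).
\]
Setting $x=\theta^2/2$ turns these into $\tfrac12\log(1+4/\theta^2)$ and $\log(1+2/\theta^2)$, which is exactly \eqref{eq:logpole-bounds}.

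To prove these bounds directly, define $f(x)=e^xE_1(x)$, which satisfies $f'(x)=f(x)-1/x$. For the upper bound, set $g(x)=\log(1+1/x)-f(x)$. Then $g(x)\to 0$ as $x\to\infty$, since both terms are $\sim 1/x$, and in fact $g(x)\sim 1/(2x^2)>0$. It then suffices to show that $g$ cannot have a nonpositive interior minimum. At a critical point $f = 1/x - 1/(x(x+1)) = 1/(x+1)$, so $g=\log(1+1/x)-1/(x+1)>0$ there, because $\log(1+y)>y/(1+y)$. The lower bound $h(x)=f(x)-\tfrac12\log(1+2/x)$ is handled the same way. We have $h\to 0^+$ at infinity, since $f\sim 1/x - 1/x^2$ while $\tfrac12\log(1+2/x)\sim 1/x - 1/x^2 + \tfrac{4}{3x^3}$, and the difference must be checked at third order. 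At a critical point of $h$, $f = 1/x - 1/(x(x+2)) = (x+1)/(x(x+2))$, and we need $(x+1)/(x(x+2)) > \tfrac12\log(1+2/x)$. This reduces to an elementary inequality in $y=2/x$, namely $\log(1+y) < y(1+y/2)/(1+y)$, which can be verified by differentiation.

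The main obstacle is the lower bound. Its asymptotic margin at infinity is only third order, so the boundary analysis must be done carefully. The critical-point argument also has to be set up rigorously: a function tending to $0$ at $\infty$ and positive at $0^+$ whose values at all critical points are positive must be positive everywhere. If that becomes delicate, we will instead cite the known bounds (Abramowitz--Stegun 5.1.20), from which Theorem~\ref{thm:cps}(b) follows immediately by the substitution in Step 1.
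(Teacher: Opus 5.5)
Your proposal is correct, but after the first step it takes a different route from this paper's sketch; your route is, as far as I recall, the one used in the original Carvalho--Polson--Scott appendix.

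\textbf{Where the two routes differ.} Both arguments begin with $u=\lambda^{-2}$. You then identify $\piH(\theta)=K e^{\theta^2/2}E_1(\theta^2/2)$ exactly and reduce (b) to the classical bounds $\tfrac12\log(1+2/x)<e^xE_1(x)<\log(1+1/x)$. You also prove these bounds from scratch via $f'=f-1/x$. Your critical-point identities $f=1/(x+1)$ and $f=(x+1)/(x(x+2))$ are right, and so are the two elementary logarithmic inequalities.

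The paper instead proposes to bound the integrand pointwise by rational functions, using $1/(1+u^{-1})\le 1$ and $1/(1+u^{-1})\ge u/(u+c)$, so that both sides integrate to elementary logarithms without any special functions. As written, that sketch does not close. The displayed integrand $u/\bigl((u+\theta^{-2})(u+1)\bigr)$ has lost the factor $e^{-\theta^2u/2}$ and decays like $1/u$. Hence both the displayed integral and the proposed upper bound $\int_0^\infty (u+\theta^{-2})^{-1}\,du$ are infinite.

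\textbf{What each route buys.} Your route gives a complete proof together with an exact closed form, and the closed form yields more than the theorem. For example, $\piH(\theta)=K\bigl(\log(2/\theta^2)-\gamma\bigr)+o(1)$ as $\theta\to0$. So the upper bound is sharp in its leading term near the origin, while the lower bound captures only half of it. The only thing the paper's intended route would buy, if repaired, is avoiding $E_1$.

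\textbf{A simplification.} You do not need the third-order expansion of $h$ at infinity. Since $h\to0$ at $\infty$ and $h\to+\infty$ at $0^+$, any point with $h\le 0$ forces an interior global minimum with value $\le 0$. That minimum is a critical point, where you have already shown $h>0$, which is a contradiction. The same reasoning handles $g$, using only $g\to0$ at $\infty$ and $g(x)\to\gamma>0$ as $x\to0^+$; the latter follows from the expansion of $E_1$ you quote in Step~2.
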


As $\theta \to 0$, both bounds behave like $-2K\log\abs{\theta}$, giving the \textbf{logarithmic pole}:
\begin{equation}\label{eq:logpole}
  \piH(\theta) \asymp -\log\abs{\theta}
  \qquad \text{as } \theta \to 0.
\end{equation}
As $\theta \to \infty$, both bounds behave like $K \cdot 2/\theta^2$, giving the \textbf{Cauchy-like tail}:
\begin{equation}\label{eq:cauchy-tail}
  \piH(\theta) \asymp \frac{2K}{\theta^2}
  \qquad \text{as } \abs{\theta} \to \infty.
\end{equation}
The bounds~\eqref{eq:logpole-bounds} are tight in the sense that the ratio of upper to lower bound converges to $2$ as $\abs{\theta} \to 0$ and to $1$ as $\abs{\theta} \to \infty$. These are not asymptotic approximations but exact two-sided inequalities valid for all $\theta \neq 0$.

The proof of \Cref{thm:cps} proceeds by substituting $u = \lambda^{-2}$ in~\eqref{eq:marginal-integral} and bounding the resulting integral $\int_0^\infty (u + \theta^{-2})^{-1} \cdot (1 + u^{-1})^{-1}\, du$ above and below using the inequalities $1/(1+u^{-1}) \leq 1$ and $1/(1+u^{-1}) \geq u/(u + c)$ for appropriate constants~$c$. The upper bound gives the $K\log(1 + 2/\theta^2)$ term; the lower bound gives the $(K/2)\log(1 + 4/\theta^2)$ term. The constant $K = (2\pi^3)^{-1/2}$ arises from the normalisation of both the Gaussian kernel and the half-Cauchy density.

\begin{remark}[The constant $K$]\label{rem:constant-K}
The appearance of $\pi$ in $K = (2\pi^3)^{-1/2}$ is not incidental. As we show in \Cref{sec:channel1}, this constant propagates directly into the exact MDP threshold $\tcrit = \sqrt{\log(\pi n/2)}$, where the factor of $\pi$ arises from the normalisation of the horseshoe density at the origin.
\end{remark}

\subsection{The Super-Efficiency Theorem}\label{sec:super-eff}

The second major result concerns the KL risk of the horseshoe Bayes predictive density. Consider the normal means model $y_i \mid \theta_i \sim N(\theta_i, \sigma^2)$, and define the horseshoe Bayes predictive density for a future observation~$z$:
\begin{equation}\label{eq:predictive}
  \hat{f}(z \mid y_i) = \int N(z;\, \theta_i,\, \sigma^2)\, p(\theta_i \mid y_i,\, \tau)\, d\theta_i.
\end{equation}
When $\theta_i = 0$, the true sampling density is $f_0(z) = N(z;\, 0, \sigma^2)$.

\begin{theorem}[Carvalho, Polson, Scott 2010---super-efficiency]\label{thm:super-eff}
When $\theta_i = 0$, the KL risk of the horseshoe Bayes predictive satisfies:
\begin{equation}\label{eq:super-eff}
  \E_{y_i \sim N(0,\sigma^2)}\!\left[\KL\!\left(f_0 \,\big\|\, \hat{f}(\cdot \mid y_i)\right)\right] = O(\tau^4).
\end{equation}
Other common shrinkage rules---the Lasso, ridge, Student-$t$---achieve at best $O(1/n)$ KL risk when $\theta_i = 0$.
\end{theorem}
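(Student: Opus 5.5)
Under the normal means model the posterior depends on $y_i$ only through the shrinkage weight $\kappa = 1/(1+\lambda^2\tau^2)$ (take $\sigma=1$ for simplicity). Conditional on $\lambda$, the predictive is Gaussian: $\hat f(z\mid y,\lambda) = N\bigl(z;\,(1-\kappa)y,\,1+(1-\kappa)\bigr)$. The full predictive $\hat f(\cdot\mid y)$ is therefore a scale/location mixture of such Gaussians, mixed over $p(\lambda\mid y,\tau)$.

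The plan is to bound the KL risk from above by convexity. Since $\KL(f_0\,\|\,\cdot)$ is convex in its second argument, $\KL(f_0\|\hat f(\cdot\mid y)) \le \E_{\lambda\mid y}\bigl[\KL\bigl(N(0,1)\,\|\,N((1-\kappa)y,\,2-\kappa)\bigr)\bigr]$. The Gaussian KL has the closed form $\tfrac12\bigl[(1-\kappa)^2y^2/(2-\kappa) + 1/(2-\kappa) - 1 + \log(2-\kappa)\bigr]$. Writing $s = 1-\kappa = \lambda^2\tau^2/(1+\lambda^2\tau^2)$, a Taylor expansion shows this is $\tfrac12 s^2 y^2 + O(s^2)$ uniformly for $s\in[0,1]$. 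The first-order terms in $s$ cancel: $1/(1+s) - 1 + \log(1+s) = s^2/2 + O(s^3)$. Hence the KL risk is bounded by a constant times $\E_y\bigl[(1+y^2)\,\E[s^2\mid y]\bigr]$.

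The core step is to show that $\E_y\bigl[(1+y^2)\E[s^2\mid y]\bigr] = O(\tau^4)$ when $y\sim N(0,1)$. The idea is to use the joint law: averaging over $y$ drawn from the null, we bound
\[
\E_y\bigl[(1+y^2)\E[s^2\mid y]\bigr] = \int (1+y^2)\,\frac{\int s^2\, N(y;0,1+\lambda^2\tau^2)\,\pi(\lambda)\,d\lambda}{m(y)}\,N(y;0,1)\,dy .
\]
The marginal $m(y)=\int N(y;0,1+\lambda^2\tau^2)\pi(\lambda)d\lambda$ is bounded below by $c\,N(y;0,1)$ for $|y|$ moderate, since mass of order one sits at $\lambda\lesssim 1$ where the variance is $1+O(\tau^2)$. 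The ratio $N(y;0,1)/m(y)$ is therefore controlled. For large $|y|$ the Gaussian factor $N(y;0,1)$ kills everything, because $m(y)\ge c\,y^{-2}$ by the Cauchy-type tail (\Cref{thm:cps}). We then swap integrals: $\int s^2 \E_{y\sim N(0,1+\lambda^2\tau^2)}[(1+y^2)\,w(y)]\,\pi(\lambda)\,d\lambda$, with $w$ a bounded weight. This reduces the problem to estimating $\int \min(\lambda^4\tau^4,1)(1+\lambda^2\tau^2)\,\pi(\lambda)\,d\lambda$ with $\pi$ half-Cauchy. That integral splits at $\lambda=1/\tau$. The region $\lambda<1/\tau$ gives $\tau^4\int_0^{1/\tau}\lambda^4/(1+\lambda^2)\,d\lambda \asymp \tau$, and the tail region also gives order $\tau$.

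This is the main obstacle: the naive bound only delivers $O(\tau)$, so the order-one posterior mass on large $\lambda$ must be handled more carefully. For large $\lambda$ the numerator carries $N(y;0,1+\lambda^2\tau^2)\approx (\lambda\tau)^{-1}$ at typical null $y$. The posterior weight of the region $\lambda>1/\tau$ is then about $\tau\int_{1/\tau}^\infty \lambda^{-3}\,d\lambda \sim \tau^3$, and combining this with the $y^2$ factor gives roughly $\tau^3$. To reach $\tau^4$ I would attempt to exploit the cancellation in the exact mixture KL rather than the convexity bound. Specifically, I would expand $\log(\hat f/f_0)$ to second order in the mixture perturbation $\hat f-f_0$, so that $\KL\approx \tfrac12\int(\hat f-f_0)^2/f_0$. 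I would then show that $\hat f - f_0$ has first moment $\E[s\mid y]\,y$ and second-moment perturbation $\E[s\mid y]$, each of size $O(\tau^2)$ after averaging in $L^2(f_0)$; the log-pole of \Cref{thm:cps} is what keeps $\E[s\mid y]$ at order $\tau^2$ (up to logs) on the null. Squaring then yields $O(\tau^4)$. If a logarithmic factor survives, I would state the result as $O(\tau^4\log(1/\tau))$ and check whether the exact constant in the log-pole removes it.

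For the comparison claim, a bounded-density prior (Lasso, ridge, Student-$t$) keeps $\E[s\mid y]$ bounded below by a constant times the prior variance scale. Calibrating that scale to order $1/\sqrt n$ yields only $O(1/n)$ KL risk, so there is no pole to concentrate the posterior at $s\approx 0$.
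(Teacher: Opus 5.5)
Your plan does not close, and the step it hinges on cannot be supplied. Getting from your $O(\tau)$ estimate to $O(\tau^4)$ needs the posterior perturbation $\E[s\mid y]$ to be $O(\tau^2)$ at a null $y$, and it is not.

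Two of your computations go wrong here. First, the posterior weight of $\{\lambda>1/\tau\}$ is not $\tau^3$. There $N(y;0,1+\lambda^2\tau^2)\approx(2\pi)^{-1/2}(\lambda\tau)^{-1}$, so the weight is of order $\tau^{-1}\int_{1/\tau}^\infty\lambda^{-3}\,d\lambda\asymp\tau$. That is the same order as the prior mass $\approx 2\tau/\pi$ of that region. Second, on $1\le\lambda\le1/\tau$ the likelihood is essentially flat. Using $s\ge\lambda^2\tau^2/2$, $N(y;0,1+\lambda^2\tau^2)\ge e^{-y^2/2}/\sqrt{4\pi}$ and $m(y)\le(2\pi)^{-1/2}$ gives $\E[s\mid y]\ge c\,e^{-y^2/2}\tau$; compare the exact prior identity $\E[1-\kappa]=\tau/(1+\tau)$.

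Hence for typical null $y$ both $\E[s\mid y]$ and $\E[s^2\mid y]$ are of exact order $\tau$, and $\hat\theta(y)\asymp\tau y$. Squaring order-$\tau$ perturbations gives $\tau^2$ at best. The $\chi^2$ expansion $\KL\approx\tfrac12\int(\hat f-f_0)^2/f_0$ is not available either. Components with $s\to1$ have mean $\approx y$ and variance $\to2$, and because $p(\lambda\mid y)$ has a polynomial tail, $\int(\hat f-f_0)^2/f_0=\infty$ for every $y\neq0$.

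Separately, after your swap of integrals the tail piece $\int_{1/\tau}^\infty(1+\lambda^2\tau^2)\pi(\lambda)\,d\lambda$ diverges, since its integrand tends to $2\tau^2/\pi$. The bound $N(y;0,1)/m(y)\le C$ throws away the decay of that ratio at large $\abs{y}$. Done carefully, the convexity bound is of order $\tau\sqrt{\log(1/\tau)}$, not $\tau$.

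In fact no argument can give $O(\tau^4)$, because the averaged risk is driven by rare, moderately large $y$. Take $y>0$ with $(1+\delta)\log(1/\tau)\le y^2\le(2-\delta)\log(1/\tau)$, and let $w(y)$ be the posterior mass of $\{\lambda\tau\ge K\}$. The slab numerator $\int_{\lambda\tau\ge K}N(y;0,1+\lambda^2\tau^2)\pi(\lambda)\,d\lambda$ is $\gtrsim\tau/y^2$, from the range $\lambda\tau\in[y,2y]$. Meanwhile $m(y)\asymp\phi(y)$ in this range, so $w(y)\phi(y)\gtrsim\tau/y^2$ and $w(y)\to0$. The slab components $N(sy,1+s)$ with $s\ge K^2/(1+K^2)$ put almost all their mass on $\{z\ge\gamma y\}$. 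Choose $\gamma<1$ so that $f_0(z\ge\gamma y)=o(w(y))$; then the data-processing inequality for this event gives $\KL(f_0\|\hat f(\cdot\mid y))\ge w(y)/2$ for small $\tau$. Integrating over this range,
\[
  \E_{y}\bigl[\KL(f_0\,\|\,\hat f(\cdot\mid y))\bigr]
  \;\gtrsim\;\tau\int_{\sqrt{(1+\delta)\log(1/\tau)}}^{\sqrt{(2-\delta)\log(1/\tau)}}\frac{dy}{y^{2}}
  \;\asymp\;\frac{\tau}{\sqrt{\log(1/\tau)}},
\]
which is incompatible with $O(\tau^4)$. The displayed bound is false as stated.

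The paper argues along a different line. It asserts $\E[\kappa_i\mid y_i]\approx1-C\tau^2/y_i^2$, uses the plug-in formula $\KL=\hat\theta_i^2/(2\sigma^2)$, and splits at $\abs{y_i}=\tau$ to reach $O(\tau^4\log^2(1/\tau))$. Its central step is exactly the premise refuted above; even for the plug-in, $\E_0[\hat\theta_i^2]\gtrsim\tau^2$. So it offers no repair.

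Your mixture representation of the predictive is the more faithful setup. With the large-$\abs{y}$ region handled correctly, your convexity step gives an honest upper bound of order $\tau\sqrt{\log(1/\tau)}$. A genuine super-efficiency statement at $\theta=0$ needs a different formulation, for instance a Ces\`aro-averaged, Clarke--Barron-type risk over $n$ observations. Your comparison with bounded-density priors remains heuristic.
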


The key point is that $\tau \ll 1/\sqrt{n}$ in the sparse regime (where $\tau = p_0/p$ and $p_0 \ll \sqrt{p/\log p}$), so $\tau^4 \ll 1/n^2 \ll 1/n$. The horseshoe achieves a strictly super-efficient rate of density estimation for null coordinates.

\begin{proof}
The posterior mean under the horseshoe satisfies:
\begin{equation}\label{eq:post-mean}
  \hat{\theta}_i(y_i) = \bigl(1 - \E[\kappa_i \mid y_i, \tau]\bigr)\, y_i,
\end{equation}
where $\kappa_i = 1/(1 + \lambda_i^2 \tau^2)$ is the shrinkage weight. The posterior expectation of $\kappa_i$ can be computed from the conditional density of $\lambda_i$ given~$y_i$. Under the half-Cauchy prior on~$\lambda_i$, the posterior is:
\begin{equation}\label{eq:lambda-posterior}
  p(\lambda_i \mid y_i, \tau)
  \propto \frac{1}{\sqrt{\lambda_i^2\tau^2 + \sigma^2}}
  \exp\!\left(-\frac{y_i^2}{2(\lambda_i^2\tau^2 + \sigma^2)}\right)
  \cdot \frac{1}{1 + \lambda_i^2}.
\end{equation}
For small~$\tau$, the factor $(\lambda_i^2\tau^2 + \sigma^2)^{-1/2}\exp(-y_i^2/(2(\lambda_i^2\tau^2 + \sigma^2)))$ is sharply peaked near $\lambda_i^2\tau^2 = \max(y_i^2 - \sigma^2, 0)$. When $\theta_i = 0$ and $y_i$ is drawn from $N(0, \sigma^2)$, the typical observation has $\abs{y_i} \sim \sigma$, and the posterior concentrates on small $\lambda_i^2\tau^2 \ll \sigma^2$, giving:
\[
  \E[\kappa_i \mid y_i, \tau]
  \approx 1 - C \cdot \frac{\tau^2}{y_i^2 + \tau^2}
  \approx 1 - C \cdot \frac{\tau^2}{y_i^2}
\]
for some constant $C > 0$ that depends on the half-Cauchy normalisation. Hence the posterior mean is:
\[
  \hat{\theta}_i(y_i) \approx C \cdot \frac{\tau^2}{y_i},
\]
which is $O(\tau^2)$ for typical $y_i \sim N(0, \sigma^2)$.

The KL divergence between the horseshoe predictive and the null density, for a location-family predictive with posterior mean $\hat{\theta}_i$, satisfies:
\begin{equation}\label{eq:kl-location}
  \KL(f_0 \| \hat{f})
  = \frac{\hat{\theta}_i^2}{2\sigma^2}
  \approx \frac{C^2 \tau^4}{2\sigma^2 y_i^2}.
\end{equation}
Integrating over $y_i \sim N(0, \sigma^2)$: the expression $\E\bigl[C^2\tau^4/(2\sigma^2 y_i^2)\bigr]$ diverges formally because $\E[1/y_i^2] = \infty$ under the normal distribution. The log-pole resolves this apparent divergence. For $\abs{y_i}$ near zero, the approximation $\E[\kappa_i \mid y_i, \tau] \approx 1 - C\tau^2/y_i^2$ breaks down: the tight bounds on $\piH(\theta)$ from \Cref{thm:cps} imply that $\E[\kappa_i \mid y_i]$ is bounded away from~$1$ for small $y_i$ as $O(\log(1/\tau)/\abs{\log y_i})$, because the log-pole density overwhelms the likelihood for $\abs{y_i} \lesssim \tau$. Splitting the integral at $\abs{y_i} = \tau$ and using this refined bound on the inner region gives total KL risk:
\begin{equation}\label{eq:kl-refined}
  \E_{y_i}\!\left[\KL(f_0 \| \hat{f})\right]
  = O(\tau^4 \log^2(1/\tau)),
\end{equation}
which is $o(1/n)$ since $\tau \ll 1/\sqrt{n}$ in the sparse regime. The $\log^2(1/\tau)$ correction arises from the log-pole's slow divergence near the origin and is absorbed into the $O(\tau^4)$ rate when $\tau$ is polynomially small in~$n$.
\end{proof}

\subsection{The Necessary and Sufficient Conditions and L\'evy Characterisation}\label{sec:nec-suf}

The ``Shrink Globally, Act Locally'' paper \citep{polson2010shrink} establishes two complementary theorems characterising what properties a prior must have for sparsity adaptation as $p \to \infty$.

\begin{theorem}[Polson--Scott 2010, necessary condition]\label{thm:ps-nec}
For a scale mixture prior $\pi(\theta) = \int_0^\infty N(\theta;\, 0,\, \lambda^2)\, g(\lambda)\, d\lambda$ to achieve near-oracle risk as $p \to \infty$ under sparsity, it is necessary that $\pi(0) = +\infty$.
\end{theorem}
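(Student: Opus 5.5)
We argue by contraposition: assume $\pi(0) < \infty$ and show that near-oracle risk fails. Near-oracle risk here means Bayes risk of order $p_0\log(p/p_0)$, up to constants, under the sparse regime $p_0/p \to 0$. The fixed target for comparison is the null-coordinate contribution. In the super-efficiency sense of \Cref{thm:super-eff}, a good prior must make the predictive density for a null coordinate indistinguishable from $f_0$ at a rate faster than any fixed $O(1/n)$. Equivalently, it must make the posterior mean satisfy $\E_{y\sim N(0,1)}[\hat\theta(y)^2] = o(p_0\log(p/p_0)/p)$ per null coordinate.

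The first step is to express the posterior mean through Tweedie's formula, $\hat\theta(y) = y + m'(y)/m(y)$, where $m = \pi * N(0,1)$ is the marginal density of $y$. When $\pi(0)<\infty$, and hence $\pi$ is bounded near the origin for a scale mixture (the density is maximised at $0$ because each normal component is), the mass that $\pi$ places in a neighbourhood $\abs{\theta}\le\epsilon$ is at most $2\epsilon\,\pi(0)$. Scale mixtures are symmetric and unimodal, so $\pi(\theta)\le\pi(0)$ for all $\theta$.

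The second step uses this to show that the prior cannot concentrate enough mass near zero to pull $\hat\theta(y)$ toward $0$ for typical null observations $\abs{y}\sim 1$. Concretely, I would show $m'(y)/m(y) \ge -y + c\,\mathrm{sign}(y)\,\delta$ for $\abs{y}\in[1,2]$, with $c,\delta>0$ depending only on the bounded shape of $\pi$ near $0$ and on the prior scale. I would then track how the global scale $\tau$ must be tuned to suppress this. Writing $\pi_\tau(\theta)=\tau^{-1}\pi(\theta/\tau)$, a bounded density forces a trade-off. If $\tau$ is small enough that null coordinates are shrunk at rate $o(1/p)$, then $\pi_\tau$ has essentially all its mass within $O(\tau)$ of the origin, with polynomially (or Gaussian/exponentially) decaying tails. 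In that case, for a signal $\abs{\theta}\asymp\sqrt{2\log(p/p_0)}$, the posterior mean is shrunk by an amount comparable to $\theta$ itself, unless the tail is heavy. If instead the tail is heavy, the pole is absent, so mass near zero scales only like $\tau\pi(0)$, and null shrinkage is insufficient.

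The main obstacle is making this dichotomy quantitative uniformly over all mixing densities $g$. I would first try to reduce to the $\kappa$-scale: the density of $\kappa=1/(1+\lambda^2\tau^2)$ near $\kappa=1$ determines the null-side shrinkage. The condition $\pi(0)<\infty$ is equivalent to $\int \lambda^{-1} g(\lambda)\,d\lambda<\infty$, which in turn means the induced density of $\kappa$ vanishes at $\kappa=1$ fast enough that $\E[1-\kappa\mid y]\gtrsim\tau$ rather than $\tau^2$, up to the tail factor. Summing over the $p-p_0$ nulls then gives risk $\gtrsim p\tau^2$. Meanwhile, signal detection requires $\tau\gtrsim (p_0/p)^{1+o(1)}$, so that the posterior can escape the origin at $\abs{y}\approx\sqrt{2\log(p/p_0)}$. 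This yields total risk of order $p\,(p_0/p)^{2}\cdot(\text{polylog})$ at best, matched against the failure mode at the other end. Balancing the two pieces shows that the risk ratio to $p_0\log(p/p_0)$ diverges, and this balancing is where I expect the real work and the need for care with constants.
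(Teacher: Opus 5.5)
\textbf{There is a genuine gap: the argument never closes, and under your reading of the statement it cannot.}

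The step where your sketch should reach a contradiction points the wrong way. By your own estimates the nulls cost at least $p\tau^2$ with $\tau\gtrsim(p_0/p)^{1+o(1)}$. That is about $(p_0^2/p)\cdot\mathrm{polylog}(p/p_0)$, which is $o\bigl(p_0\log(p/p_0)\bigr)$ when $p_0/p\to0$. So nothing diverges, and the signal-side ``failure mode'' is never quantified.

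The underlying error is in the mass-near-zero step. The bound $\pi(\theta)\le\pi(0)$ holds for the base density. Once you pass to $\pi_\tau(\theta)=\tau^{-1}\pi(\theta/\tau)$, the relevant quantity is $\pi_\tau(0)=\pi(0)/\tau\to\infty$. Moreover $\pi_\tau$ puts mass $1-o(1)$ in every fixed neighbourhood of $0$ (mass $1-O(\tau/\epsilon)$ in $[-\epsilon,\epsilon]$ for Cauchy-type tails), not ``$\tau\pi(0)$''. Finiteness of $\pi(0)$ is not invariant under the global scaling, so the heavy-tail branch of your dichotomy has no force.

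The $\kappa$-scale reduction does not rescue this. For the horseshoe itself, with $\sigma=1$ and $\tau\le1$, the range $\lambda\ge1/\tau$ alone gives $\E[1-\kappa\mid y,\tau]\ge c\,\tau e^{-y^2/2}$ for an absolute constant $c>0$. So a ``$\tau$ versus $\tau^2$'' gap cannot separate pole priors from no-pole priors.

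More fundamentally, under your reading (squared-error risk of order $p_0\log(p/p_0)$ with a tuned global scale) the contrapositive is false. Take $\theta\sim\tau\,C(0,1)$. This is a normal scale mixture with $\lambda^{-2}\sim\mathrm{Gamma}(1/2,\ \text{rate }\tau^2/2)$, and $\pi_\tau(0)=1/(\pi\tau)<\infty$. For large $\abs{y}$ the marginal is $m(y)\approx\phi(y)+\tau/(\pi y^2)$. Tweedie then gives $\hat\theta(y)\approx y\,\bigl[\tau/(\pi y^2)\bigr]\big/\bigl[\phi(y)+\tau/(\pi y^2)\bigr]$. This is a smooth threshold at $\sqrt{2\log(1/\tau)}$, exactly as for the horseshoe. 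It has $\hat\theta(y)=O(\tau)$ for $\abs{y}\lesssim1$ and $\abs{\hat\theta(y)-y}\lesssim\sqrt{\log(1/\tau)}$ uniformly. With $\tau\asymp p_0/p$ the nulls cost $O\bigl(p\tau\sqrt{\log(1/\tau)}\bigr)$ and the signals $O\bigl(p_0\log(p/p_0)\bigr)$. Hence no uniform-in-$g$ dichotomy of the kind you propose exists.

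The paper gives no standalone proof; the theorem is cited. Its supporting argument is the Laplace discussion after the statement together with \Cref{prop:lasso}. That argument concerns a different property: super-efficiency of the per-coordinate predictive KL risk at $\theta_0=0$ (\Cref{thm:super-eff}). It argues that when $\pi(0)<\infty$ the weight $\E[\kappa\mid y]$ cannot be driven to $1$ for small $\abs{y}$.

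That claim only makes sense with the prior scale held fixed, and its rigorous form is Clarke--Barron. For a scale mixture with $\pi(0)<\infty$, dominated convergence (dominating function $g(\lambda)/(\sqrt{2\pi}\,\lambda)$) makes $\pi$ continuous and positive at $0$. So the expansion behind \eqref{eq:self-info} applies at $\theta_0=0$, and the cumulative KL risk is $\tfrac12\log n-\log\pi(0)+O(1)$, the plain parametric value. Unimodality caps the prior mass of $[-n^{-1/2},n^{-1/2}]$ at $2\pi(0)n^{-1/2}$. The horseshoe's log pole inflates that mass by a factor of order $\log n$, which lowers the Clarke--Barron-type upper bound on the cumulative KL by $\log\log n$. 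That fixed-prior necessity statement is the one you should aim to prove.
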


\begin{theorem}[Polson--Scott 2010, sufficient condition]\label{thm:ps-suf}
If $\pi(\theta)$ satisfies $\pi(\theta) \asymp -\log\abs{\theta}$ near zero (logarithmic pole) and $\pi(\theta) \asymp \abs{\theta}^{-\alpha}$ for some $\alpha \in (1,2]$ in the tails, then the prior achieves near-oracle risk.
\end{theorem}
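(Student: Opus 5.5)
The plan is to work on the posterior-mean (Tweedie) side and use the paper's own operational meaning of near-oracle risk: the risk of the Bayes rule, summed over $p$ coordinates, is within a logarithmic factor of the oracle risk $p_0$ when the proportion of signals $p_0/p \to 0$. That is, we aim for
\[
  \sum_i \E\bigl(\hat\theta_i - \theta_i\bigr)^2 \lesssim p_0 \log(p/p_0).
\]
With the global scale $\tau$ included, so that the prior is $\pi_\tau(\theta) = \tau^{-1}\pi(\theta/\tau)$, the posterior mean has the Tweedie representation
\[
  \hat\theta(y) = y + \frac{d}{dy}\log m_\tau(y), \qquad m_\tau = \pi_\tau * N(0,1).
\]
The risk therefore splits into two parts, handled separately.

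\textbf{Null coordinates} ($\theta_i = 0$), of which there are $p - p_0$. We bound $\E_0[\hat\theta(y)^2]$, working in the $\kappa$-scale. For the log-pole class the mixing density $g$ of $\lambda$ is bounded and positive at $0$; this is exactly what produces $\pi(\theta) \asymp -\log\abs{\theta}$, as in the substitution argument behind \Cref{thm:cps}. Hence $p(\kappa) \propto (1-\kappa)^{-1/2}$ near $\kappa = 1$, and after the $\tau$-rescaling the posterior mass on $1-\kappa \gtrsim \tau^2$ is of order $\tau e^{y^2/2}$ up to logarithms. Following the proof of \Cref{thm:super-eff}, we split at $\abs{y} = \sqrt{2\log(1/\tau)}$. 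Below the threshold, $\E[1-\kappa \mid y] \lesssim \tau e^{y^2/2}\log(1/\tau)$. Above it, we use the crude bound $\abs{\hat\theta} \le \abs{y}$. Integrating against $\phi(y)$ gives a null risk of $O\bigl(\tau\sqrt{\log(1/\tau)}\bigr)$ per coordinate. Choosing $\tau \asymp p_0/p$, the total null contribution is $O\bigl(p_0\sqrt{\log(p/p_0)}\bigr)$.

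\textbf{Signal coordinates} (the $p_0$ nonzero $\theta_i$). We show that $\abs{\hat\theta(y) - y} \lesssim \sqrt{\log(1/\tau)}$ uniformly in $y$, and that this deviation is $o(1)$ for $\abs{y} \gg \sqrt{2\log(1/\tau)}$. This is where the tail assumption $\pi(\theta) \asymp \abs{\theta}^{-\alpha}$ with $\alpha \in (1,2]$ enters. Polynomial tails make $m_\tau$ regularly varying of index $-\alpha$, so $(\log m_\tau)'(y) \sim -\alpha/y \to 0$; this is the tail-robustness, bounded-influence property. For the uniform bound at intermediate $y$, we use the scale-mixture inequality $\abs{(\log m_\tau)'(y)} \le \abs{y}$ together with the threshold split. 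Each signal then contributes $1 + O(\log(1/\tau)) = O(\log(p/p_0))$. Summing over the $p_0$ signals and adding the null bound gives near-oracle risk $p_0\log(p/p_0)$.

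The main obstacle is the null-coordinate estimate. It requires the posterior for $\kappa$ to concentrate at $1$ at the rate delivered by the log pole, uniformly across $\abs{y}$ near the threshold, where the Gaussian factor $e^{y^2/2}$ competes with $\tau$. The first thing we would try is the exact $\kappa$-density: the Beta$(1/2,1/2)$-type prior (see \Cref{sec:kappa}) multiplied by the likelihood $\kappa^{1/2}e^{-\kappa y^2/2}$, bounding the ratio of incomplete integrals by Laplace-type estimates as in \citet{vanderpas2014horseshoe}. To pass from the horseshoe to the general class in the statement, we would use only the two-sided comparisons $g(\lambda) \asymp 1$ near $0$ and $g(\lambda) \asymp \lambda^{-\alpha}$ at infinity, which are the hypotheses translated through the scale-mixture representation.
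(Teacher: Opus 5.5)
The paper does not prove this statement. It is quoted from \citet{polson2010shrink}, and the only supporting text is the heuristic ``too weak / too strong'' paragraph that follows it, so your argument has to stand on its own. Its architecture is the \citet{vanderpas2014horseshoe} argument: Tweedie's formula, a null/signal split at a threshold, $\abs{\hat\theta(y)}\lesssim(\text{slab mass})\cdot\abs{y}e^{y^2/2}$ below the threshold, and $\abs{\hat\theta}\le\abs{y}$ above it. It works for the horseshoe, $\alpha=2$.

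\textbf{The genuine gap is the transfer to $\alpha\in(1,2)$.} After rescaling, the prior mass of the slab $\{\lambda\tau\ge 1\}$ (equivalently $\{1-\kappa\ge 1/2\}$) is
\[
  w(\tau) = P(\lambda\ge 1/\tau) \asymp \int_{1/\tau}^{\infty}\lambda^{-\alpha}\,d\lambda \asymp \tau^{\alpha-1},
\]
which equals $\tau$ only when $\alpha=2$. Redoing your null computation under $g(\lambda)\asymp\lambda^{-\alpha}$ gives the following.
\begin{itemize}
\item The bound becomes $\E[1-\kappa\mid y]\lesssim\tau^{\alpha-1}e^{y^2/2}$. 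The range $\lambda\le 1$ contributes only $O(\tau^2)$, and the range $1\le\lambda\le 1/\tau$ contributes $O(\tau^{\alpha-1}e^{y^2/4})$.
\item The crossover moves to $\sqrt{2\log(1/w)}$.
\item The null risk per coordinate is of order $w$ up to logarithmic factors.
\end{itemize}
This is sharp, not an artefact of the bounds. Beyond the crossover $\hat\theta(y)\approx y$, so each null pays about $P_0(\abs{y}\gtrsim\sqrt{2\log(1/w)})$.

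With your choice $\tau\asymp p_0/p$, the null total is therefore of order $p\,(p_0/p)^{\alpha-1}=p_0\,(p/p_0)^{2-\alpha}$ up to logarithms. For every $\alpha<2$ this is polynomially larger than $p_0\log(p/p_0)$ in the sparse regime, and ``up to logarithms'' cannot absorb it. The repair is to calibrate the slab mass rather than $\tau$: take $\tau\asymp(p_0/p)^{1/(\alpha-1)}$, so that $w\asymp p_0/p$. Then the threshold is $\sqrt{2\log(p/p_0)}$ and the null total is $O(p_0\sqrt{\log(p/p_0)})$. Your signal bound $O(\sqrt{\log(1/\tau)})=O(\sqrt{\log(p/p_0)/(\alpha-1)})$ still gives $O(\log(p/p_0))$ per signal.

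Several smaller slips also need correcting.
\begin{itemize}
\item The set $\{1-\kappa\gtrsim\tau^2\}$ is essentially $\{\lambda\gtrsim 1\}$, which carries posterior mass of order one under the null. The set you want is $\{1-\kappa\gtrsim 1\}$, i.e.\ $\{\lambda\tau\gtrsim 1\}$.
\item As written, $\E[1-\kappa\mid y]\lesssim\tau e^{y^2/2}\log(1/\tau)$ integrates to $O(\tau\log^{5/2}(1/\tau))$, not $O(\tau\sqrt{\log(1/\tau)})$. You need, and can prove, the bound without the $\log(1/\tau)$ factor.
\item The hypotheses concern $\pi$, and $\pi\asymp-\log\abs{\theta}$, $\pi\asymp\abs{\theta}^{-\alpha}$ do not imply the pointwise comparisons $g\asymp 1$ near $0$ or $g\asymp\lambda^{-\alpha}$ at infinity. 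Fortunately your bounds use only two things. One is the tail probabilities $P(\lambda\ge t)$, which the tail hypothesis does control: exactly from above, up to logarithmic factors from below. The other is a lower bound on $P(\lambda\le M)$ for some fixed $M$. The pole itself is never used.
\item Two-sided $\asymp$ bounds do not make $m_\tau$ regularly varying, so $(\log m_\tau)'(y)\sim-\alpha/y$ is unjustified. This is harmless: the rate only needs the uniform $O(\sqrt{\log(1/w)})$ bound on $\abs{\hat\theta(y)-y}$. That bound can be obtained directly from the identity $\abs{(\log m_\tau)'(y)}=\abs{y}\,\E[\kappa\mid y]$, by splitting the posterior of $\lambda$ according to the size of $1+\lambda^2\tau^2$ relative to $y^2/\log(1/w)$.
\end{itemize}
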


The combination---unbounded pole at zero, but only logarithmically---is the precise characterisation. Too weak (bounded density, as in the Lasso or ridge) fails the necessary condition (\Cref{thm:ps-nec}). Too strong (power-law pole $\abs{\theta}^{-\alpha}$ with $\alpha \geq 1$, as in the standard Cauchy prior on $\theta$ itself) satisfies \Cref{thm:ps-nec} but violates the finiteness of Bayes risk (the second moment diverges, breaking Cram\'er-regularity). The logarithmic pole is the \emph{unique} singularity level that satisfies both conditions simultaneously.

To see why bounded density at zero fails, consider the Laplace prior $\pi(\theta) = (2\lambda)^{-1}\exp(-\abs{\theta}/\lambda)$, which has $\pi(0) = 1/(2\lambda) < \infty$. Since $\pi(0)$ is finite, the posterior mean $\hat{\theta}_i(y_i)$ satisfies $\hat{\theta}_i(y_i) \to y_i \cdot (1 - \pi(0)\sigma/\abs{y_i}) + O(y_i^{-2})$ for large~$\abs{y_i}$, and for small~$\abs{y_i}$ the shrinkage factor $\E[\kappa_i \mid y_i]$ is bounded away from~$1$ because the prior density at zero is finite and cannot overwhelm the likelihood. For a null coordinate with $\theta_i = 0$ and $y_i \sim N(0, \sigma^2)$, the posterior mean $\hat{\theta}_i(y_i) = \Theta(\sigma)$ for typical $\abs{y_i} \sim \sigma$, giving KL risk:
\begin{equation}\label{eq:lasso-kl}
  \E_{y_i}\!\left[\KL(f_0 \| \hat{f}^{\mathrm{Laplace}})\right]
  = \E\!\left[\frac{\hat{\theta}_i^2}{2\sigma^2}\right]
  = \Theta(1).
\end{equation}
Choosing $\lambda$ to decrease with~$n$ reduces this to $\Theta(1/n)$ at best---the standard parametric rate. The Laplace prior cannot achieve super-efficiency because its finite density at zero means the prior does not overwhelm the likelihood for small observations; some residual shrinkage error always remains.

\citet{polson2010shrink} further characterise the class of admissible sparse priors through their representation as L\'evy processes. Any scale mixture prior $\pi(\theta) = \int_0^\infty N(\theta;\, 0,\, s)\, \nu(ds)$ is characterised by its L\'evy measure~$\nu$.

\begin{proposition}[Polson--Scott 2010]\label{prop:levy}
The behaviour of $\pi(\theta)$ near zero is controlled by the behaviour of $\nu$ near zero:
\begin{equation}\label{eq:levy}
  \pi(0) = \frac{1}{\sqrt{2\pi}} \int_0^\infty s^{-1/2}\, \nu(ds).
\end{equation}
This integral is finite (bounded density at zero) if and only if $\nu$ integrates $s^{-1/2}$ near zero. A logarithmic pole at zero corresponds to $\nu(ds) \asymp s^{-1}\, ds$ near $s = 0$---the Cauchy/stable-$1/2$ L\'evy measure.
\end{proposition}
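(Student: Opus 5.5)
The plan is to treat $\pi(\theta)=\int_0^\infty (2\pi s)^{-1/2}e^{-\theta^2/(2s)}\,\nu(ds)$ as a Laplace-type integral in the variance $s$ and to read off both claims from the behaviour of the weight $s^{-1/2}\nu(ds)$ near $s=0$.

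\textbf{Step 1: the formula for $\pi(0)$ and the finiteness criterion.} For fixed $s$, the kernel $(2\pi s)^{-1/2}e^{-\theta^2/(2s)}$ is nonnegative and increases to $(2\pi s)^{-1/2}$ as $\theta\to0$. By monotone convergence, $\lim_{\theta\to0}\pi(\theta)=(2\pi)^{-1/2}\int_0^\infty s^{-1/2}\,\nu(ds)$. This is the displayed identity, with $\pi(0)$ understood as this limit, possibly $+\infty$.

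I then split the integral at $s=1$. On $[1,\infty)$ the weight satisfies $s^{-1/2}\le1$, so that piece is finite whenever $\nu$ is a finite (probability) measure there, as for a mixing density. Hence $\pi(0)<\infty$ if and only if $\int_0^1 s^{-1/2}\nu(ds)<\infty$, which is the stated equivalence.

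\textbf{Step 2: the log-pole correspondence.} This is an Abelian/Tauberian step. Suppose the combined weight behaves as $s^{-1/2}\nu(ds)\asymp c\,s^{-1}\,ds$ on $(0,s_0)$.

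\emph{Inner region.} Substitute $u=\theta^2/(2s)$ on $(0,s_0)$. Then $\int_0^{s_0}s^{-1}e^{-\theta^2/(2s)}ds=\int_{\theta^2/(2s_0)}^\infty u^{-1}e^{-u}\,du=E_1\bigl(\theta^2/(2s_0)\bigr)$. Since $E_1(x)=-\log x-\gamma+o(1)$ as $x\to0$, this gives $-2\log\abs{\theta}+O(1)$.

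\emph{Outer region.} The piece over $[s_0,\infty)$ stays bounded, by Step 1.

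Together these give $\pi(\theta)\asymp-\log\abs{\theta}$. As a consistency check I will verify this against the horseshoe itself. With $s=\lambda^2$ and $\lambda\sim C^+(0,1)$, the mixing density is $\nu(ds)=\pi^{-1}s^{-1/2}(1+s)^{-1}ds$, so $s^{-1/2}\nu(ds)\sim\pi^{-1}s^{-1}ds$. This reproduces the coefficient $2K\cdot(\text{const})$ of \Cref{thm:cps} up to the normalising constants, which confirms the identification with the Cauchy/stable-$1/2$ measure.

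To show this level is the boundary, I will run the same substitution for a general exponent. If $s^{-1/2}\nu(ds)\asymp s^{-\beta}ds$, the inner integral becomes $\abs{\theta}^{2(1-\beta)}\int u^{\beta-2}e^{-u}du$. This is bounded for $\beta<1$, logarithmic at $\beta=1$, and a power pole $\abs{\theta}^{-2(\beta-1)}$ for $\beta>1$.

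\textbf{Main obstacle: the convention in the statement.} The claim that ``$\nu(ds)\asymp s^{-1}ds$'' gives a log pole must be read as a statement about the weight $s^{-1/2}\nu(ds)$ appearing in~\eqref{eq:levy}. Taken literally, $\nu(ds)\asymp s^{-1}ds$ would give $\beta=3/2$ and hence the non-integrable pole $\abs{\theta}^{-1}$. I would therefore state explicitly which convention is meant, namely $\nu(ds)\asymp s^{-1/2}ds$, equivalently the half-Cauchy on $\lambda$. I expect this to be the delicate point of the argument.

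A secondary, routine issue is the passage from $\asymp$ for the measure to $\asymp$ for $\pi$. Two-sided bounds $c_1s^{-1}\le s^{-1/2}\nu'(s)\le c_2s^{-1}$ on $(0,s_0)$ pass directly through the monotone kernel, so the $E_1$ computation gives matching upper and lower logarithmic bounds.
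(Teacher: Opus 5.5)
The paper gives no proof of this proposition. It is quoted from Polson--Scott (2010), and the only supporting text is the sentence after it, which asserts that $s=\lambda^2\tau^2$ has density proportional to $s^{-1}$ near zero. Your computation shows that sentence is off by a factor $s^{1/2}$: at $\tau=1$ the mixing density is $\pi^{-1}s^{-1/2}(1+s)^{-1}$.

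Your reading of the statement is the right repair. Under the paper's own convention $\pi(\theta)=\int_0^\infty N(\theta;0,s)\,\nu(ds)$, a literal $\nu(ds)\asymp s^{-1}ds$ is not even finite near $s=0$, and it produces the non-integrable pole $\abs{\theta}^{-1}$. So it is the weight $s^{-1/2}\nu(ds)$ in~\eqref{eq:levy} that must be $\asymp s^{-1}ds$. With that reading your argument is correct and supplies the proof the paper omits: monotone convergence for the identity, the split at $s=1$ using $\nu([1,\infty))<\infty$, the $E_1$ substitution under two-sided density bounds, and the exponent trichotomy placing the boundary at $\beta=1$.

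A few things to tighten.

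\begin{itemize}
\item The constant is exact, not ``up to normalising constants'': $(2\pi)^{-1/2}\pi^{-1}=(2\pi^3)^{-1/2}=K$. Keeping the factor $(1+s)^{-1}$ in your substitution gives the closed form $\piH(\theta)=Ke^{\theta^2/2}E_1(\theta^2/2)=-2K\log\abs{\theta}+O(1)$. This matches the upper bound in \Cref{thm:cps}; the lower bound there behaves only like $-K\log\abs{\theta}$.

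\item The condition $\nu(ds)\asymp s^{-1/2}ds$ is more transparently stated as $p_\lambda(\lambda)\asymp 1$ near $\lambda=0$, since $p_s(s)=p_\lambda(\sqrt{s})/(2\sqrt{s})$. The half-Cauchy is an instance of this condition, not an equivalent of it.

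\item Your Step~2 gives only the Abelian direction. If ``corresponds'' is meant as an equivalence, the converse fails pointwise: an atomic $\nu$ with mass $\propto 2^{-k/2}$ at $s=2^{-k}$ gives a logarithmic pole yet has no density. What does hold is the integrated form $\int_x^1 s^{-1/2}\nu(ds)\asymp\log(1/x)$. To get it, view $\sqrt{2\pi}\,\pi(\theta)$ as the Laplace transform, evaluated at $\theta^2/2$, of the image of $s^{-1/2}\nu(ds)$ under $w=1/s$, and apply a Tauberian argument; Karamata's theorem gives the version with $\sim$ in place of $\asymp$.

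\item Nothing in your computation confirms the ``stable-$1/2$'' label. The stable-$1/2$ subordinator has L\'evy density $\propto s^{-3/2}$, so treat that phrase in the statement as descriptive.
\end{itemize}
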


The horseshoe's local scale $\lambda \sim C^+(0,1)$ induces a variance $s = \lambda^2 \tau^2$ with distribution proportional to $s^{-1}$ near zero. This is precisely the Cauchy L\'evy measure at the boundary. The connection to stable processes is not a coincidence: the half-Cauchy distribution on~$\lambda$ is closely related to the stable-$1/2$ subordinator \citep{polson2012halfcauchy}, and the induced distribution on the variance $s = \lambda^2$ has L\'evy density proportional to $s^{-1}$ near zero. The horseshoe thus sits exactly at the interface between priors that are too sparse (bounded density, $\nu$ integrates $s^{-1/2}$) and too diffuse (non-integrable power poles, $\nu$ does not integrate $s^{-1/2-\epsilon}$ for any $\epsilon > 0$) for efficient sparse estimation.

\section{The MDP Framework}\label{sec:mdp}

Consider $n$ independent tests of $H_{0i}: \theta_i = 0$ versus $H_{1i}: \theta_i \neq 0$ based on $y_i \sim N(\theta_i, 1)$, $i = 1, \ldots, p$ (with $p = n$), under the two-groups model with sparsity proportion $p_0/p$. The Bayes risk of a testing procedure~$\varphi$ with rejection region $\{\abs{y_i} > t_n\}$ is:
\begin{equation}\label{eq:bayes-risk}
  r(\pi, \varphi)
  = (1 - p_0/p) \cdot P_0(\abs{Y} > t_n)
  + (p_0/p) \int P_\theta(\abs{Y} \leq t_n)\, d\pi(\theta).
\end{equation}
The first term is the total Type~I error (false discoveries among null coordinates); the second is the Type~II error (missed signals), weighted by the prior on signal sizes. The central question is: at what threshold~$t_n$ does the Bayes risk achieve its minimum?

The ABOS (Asymptotically Bayes Optimal under Sparsity) framework of \citet{bogdan2011asymptotic} established that for testing $p$ hypotheses with $p_0$ true signals, the minimax Bayes risk under $0$--$1$ loss is of order $p_0 \log(p/p_0)/n$ when $p_0/p \to 0$. The ABOS rate was shown to be achieved by Bonferroni-type procedures with threshold $\sqrt{2\log(p/p_0)}$ and by certain Bayesian procedures. \citet{datta2013asymptotic} proved that the horseshoe prior achieves the ABOS rate. The MDP result of \citet{datta2026newlook} refines this by identifying the \emph{exact} threshold constant and connecting it to the prior's density at the origin.

\begin{theorem}[Datta, Polson, Sokolov, Zantedeschi 2026]\label{thm:mdp}
Under Cram\'er regularity of the prior, local prior smoothness at zero, and symmetric $0$--$1$ loss, the Bayes-risk-optimal rejection boundary satisfies $n\lambda_n^2 \asymp \log n$, yielding thresholds of order $\sqrt{\log n}$. For the Cauchy prior, the exact threshold is:
\begin{equation}\label{eq:tcrit}
  \tcrit = \sqrt{\log(\pi n/2)}.
\end{equation}
\end{theorem}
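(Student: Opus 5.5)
The natural route is to treat each coordinate separately and set up the Bayes test as a likelihood-ratio test. I would reparametrise the alternative on the moderate-deviation scale. Take $y_i = \bar X_i \sim N(\theta_i, 1/n)$, equivalently $\sqrt n\,\bar X_i \sim N(\sqrt n\theta_i,1)$, and put a Cauchy prior of scale one on $\theta_i$ under $H_{1i}$. Under symmetric $0$--$1$ loss with equal prior odds, the Bayes rule rejects exactly when the Bayes factor exceeds one:
\[
  B_n(\bar x) = \frac{\int \sqrt{n}\,\phi(\sqrt n(\bar x-\theta))\,\pi(\theta)\,d\theta}{\sqrt n\,\phi(\sqrt n\,\bar x)} > 1 .
\]
The rejection boundary $\lambda_n$ is therefore the root of $B_n(\lambda_n)=1$. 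The whole proof is an asymptotic expansion of the numerator and denominator on the scale $\bar x = t/\sqrt n$ with $t\to\infty$ slowly.

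\textbf{Step 1 (Laplace approximation of the marginal).} Since the likelihood concentrates at width $n^{-1/2}$ while the prior varies on scale $O(1)$, the numerator equals $\pi(\bar x)\,(1+o(1))$, uniformly for $\bar x$ in a shrinking neighbourhood of zero. Here I use Cram\'er regularity to control the tails of the prior and local smoothness to control the error term $O(\pi''/n)$. I expect this uniformity on the moderate-deviation range $\bar x \asymp \sqrt{\log n/n}$ to be the main obstacle. It must be shown that $\pi(\bar x)=\pi(0)(1+O(\bar x^2))$ and that the Gaussian-smoothing error is $o(1)$ relative to $\pi(0)$. I would split the integral into $|\theta-\bar x|\le n^{-1/2}\log n$, where a Taylor expansion applies, and its complement, which is bounded by a Gaussian tail times $\sup\pi$.

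\textbf{Step 2 (solve the boundary equation).} With Step 1, the equation $B_n=1$ becomes $\pi(0)(1+o(1)) = \sqrt{n/(2\pi)}\,e^{-n\lambda_n^2/2}$. This gives
\[
  n\lambda_n^2 = \log n - 2\log\!\bigl(\sqrt{2\pi}\,\pi(0)\bigr) + o(1),
\]
which yields the general statement $n\lambda_n^2\asymp \log n$, i.e.\ a threshold of order $\sqrt{\log n}$. Self-consistency holds because $\lambda_n\to0$, which justifies the Step 1 expansion a posteriori.

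\textbf{Step 3 (Cauchy constant).} For the standard Cauchy prior, $\pi(0)=1/\pi$, so $2\pi\,\pi(0)^2 = 2/\pi$. The threshold in $z$-units is then $t^2 = n\lambda_n^2 = \log n - \log(2/\pi) = \log(\pi n/2)$, which is \eqref{eq:tcrit}. Finally I would check that this boundary indeed minimises \eqref{eq:bayes-risk}. The Bayes rule minimises pointwise posterior loss, and the first-order condition of \eqref{eq:bayes-risk} in $t_n$ reproduces $B_n(t_n/\sqrt n)=1$, so the argument closes. The precise dependence on $\pi(0)$ shows why the constant (and, for the horseshoe, the behaviour at the origin) enters the threshold.
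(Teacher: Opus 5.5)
Your proposal is correct and follows essentially the paper's route. The optimal boundary is where the null density equals the alternative marginal: this is the first-order condition \eqref{eq:saddle-point}, equivalently Bayes factor one under equal odds. Your uniform Laplace step plays the role of the paper's moderate deviation lemma, giving $n\lambda_n^2=\log n-\log(2\pi\,\pi(0)^2)+o(1)$ and hence $\log(\pi n/2)$ for $\pi(0)=1/\pi$.

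Your bookkeeping is in fact cleaner than the paper's sketch in two respects. First, it makes explicit that the constant comes from $\pi(\lambda_n)\to\pi(0)$ on the $\bar x$-scale with equal prior odds, not from $\pi(t_n)$ with sparsity weights as \eqref{eq:saddle-point} literally reads. Second, Step 1 needs only $\sup\pi<\infty$ plus smoothness at $0$, not Cram\'er regularity, which the Cauchy prior does not satisfy.
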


The proof proceeds via a uniform moderate deviation lemma. The threshold $t_n = \sqrt{\log n}$ lies in the \textbf{moderate deviation regime}---between the CLT scale $O(1)$ where the normal approximation holds with fixed accuracy, and the large deviation scale $\sqrt{2\log p}$ where the Bonferroni correction operates. At the moderate deviation scale, the Mill's ratio approximation for the normal tail is:
\[
  P_0(\abs{Y} > t_n)
  \approx \frac{2\phi(t_n)}{t_n}
  = \frac{\sqrt{2/\pi}}{t_n}\exp(-t_n^2/2)
  \asymp \frac{1}{n\sqrt{\log n}},
\]
which when multiplied by the prior probability $(1 - p_0/p) \approx 1$ and summed over $p$ terms gives total Type~I error $O(p/(n\sqrt{\log n}))$---consistent with the ABOS framework when $p \asymp n$.

The exact constant in~\eqref{eq:tcrit} arises from a saddle-point calculation. The Bayes-optimal threshold balances Type~I and Type~II errors, which requires solving:
\begin{equation}\label{eq:saddle-point}
  \frac{\partial}{\partial t_n} r(\pi, \varphi) = 0
  \quad\Longleftrightarrow\quad
  (1 - p_0/p) \cdot 2\phi(t_n) = (p_0/p) \cdot 2\pi(t_n)\,\phi(0),
\end{equation}
where the left side is the marginal density of $\abs{Y}$ under $H_0$ at $t_n$, and the right side is the prior-weighted density of signal alternatives at~$t_n$. For the Cauchy prior $\pi(\theta) = 1/(\pi(1+\theta^2))$, evaluating~\eqref{eq:saddle-point} at $t_n^2 = \log n + c$ and expanding to leading order gives $c = \log(\pi/2)$, yielding $t_n^2 = \log(\pi n/2)$.

The distinction between the three scales is crucial for understanding why the MDP is the natural home for sparse testing. At the CLT scale ($t_n = O(1)$, fixed threshold), the Type~I error per coordinate is $\Theta(1)$---far too large for simultaneous testing. At the large deviation scale ($t_n = \sqrt{2\log p}$, Bonferroni), the Type~I error is $O(1/p)$ per coordinate, controlling the family-wise error rate but at the cost of very low power. The moderate deviation scale ($t_n = \sqrt{\log n}$) achieves the ABOS-optimal balance: the Type~I error $O(1/(n\sqrt{\log n}))$ per coordinate is small enough for Bayes risk optimality but large enough to retain power against signals of size $\sqrt{\log n}$. As \citet{rubin1965bayes} established, this intermediate scale is where Bayes risk efficiency---the ratio of Bayes risk to minimax risk---converges to one.

The saddle-point equation~\eqref{eq:saddle-point} also reveals why the MDP constant is prior-specific while the MDP \emph{rate} is universal. The rate $\sqrt{\log n}$ is determined by the balance between the Gaussian tail decay $\exp(-t^2/2)$ and the sample size~$n$, which is independent of the prior. The constant, however, depends on the prior density $\pi(t_n)$ evaluated at the threshold, and for the horseshoe this evaluation involves the log-pole coefficient~$K$. Different log-pole priors with different constants~$K'$ would yield different constants $c'$ in $\tcrit' = \sqrt{\log(c' n)}$, but the $\sqrt{\log n}$ scaling would be unchanged. This separation of rate and constant is a hallmark of moderate deviation theory: the rate is determined by the exponential tilting (here, the Gaussian tail), while the constant is determined by the pre-exponential factor (here, the prior density).

A key feature of this result is \textbf{universality}: the $\sqrt{\log n}$ scaling holds across all priors satisfying Cram\'er-regularity and local smoothness at zero. The Cram\'er condition requires:
\begin{equation}\label{eq:cramer}
  M(t) = \E_\pi[e^{t\theta}] < \infty
  \qquad\text{for all } t \text{ in a neighbourhood of zero.}
\end{equation}
For the MDP expansion to hold, the prior must be locally regular near the testing boundary and normalisable near the origin: $\int_0^\varepsilon \pi(\theta)\, d\theta < \infty$. As shown in \Cref{sec:channel1}, the Polson--Scott log-pole $\piH(\theta) \asymp -\log\abs{\theta}$ is precisely the boundary case: the log singularity is integrable (normalisable prior, finite Bayes risk near zero) but the density is unbounded at zero. Priors with stronger poles $\abs{\theta}^{-\alpha}$ for $\alpha \geq 1$ are not normalisable near zero; priors with bounded density fail the ABOS necessary condition.

\section{Connecting the Polson--Scott Bounds to the MDP}\label{sec:connections}

The connection between the finite-sample Polson--Scott bounds and the asymptotic MDP runs through three channels: the log-pole as the Cram\'er-regularity boundary, super-efficiency as the mechanism producing the MDP detection zone, and the Clarke--Barron information-theoretic framework as the unifier.

\subsection{Channel 1: The Log-Pole as the Cram\'er Boundary}\label{sec:channel1}

The log-pole $\piH(\theta) \asymp -\log\abs{\theta}$ sits at the exact boundary of Cram\'er-regularity. To make this precise, we characterise the boundary in terms of the singularity exponents.

\begin{proposition}[Origin integrability boundary]\label{prop:cramer-boundary}
Consider the family of scale mixture priors with marginal density $\pi(\theta) \sim \abs{\theta}^{-\alpha}(-\log\abs{\theta})^\beta$ as $\theta \to 0$, for $\alpha \geq 0$ and $\beta \geq 0$. The prior is normalisable near the origin---$\int_0^\varepsilon \pi(\theta)\, d\theta < \infty$---if and only if $\alpha < 1$. When $\alpha < 1$, the near-zero contribution to the second moment $\int_0^\varepsilon \theta^2 \pi(\theta)\, d\theta$ is also finite for all~$\beta$. In particular, the horseshoe ($\alpha = 0$, $\beta = 1$) is normalisable with finite near-zero second moment, while a prior with power-law pole $\abs{\theta}^{-\alpha}$ for $\alpha \geq 1$ is not normalisable.
\end{proposition}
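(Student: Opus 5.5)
The plan is to reduce everything to the behaviour of the one-dimensional integral $\int_0^\varepsilon \theta^{-\alpha}(-\log\theta)^\beta\, d\theta$, with $\varepsilon<1$ small. First we make ``$\sim$'' precise. For $\theta\in(0,\varepsilon]$ we take $c\,\theta^{-\alpha}(-\log\theta)^\beta \le \pi(\theta) \le C\,\theta^{-\alpha}(-\log\theta)^\beta$ for constants $0<c\le C$. Then $\pi$ is integrable near the origin if and only if this model function is. Symmetry of scale mixtures lets us restrict to $\theta>0$.

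The key step is the substitution $\theta=e^{-u}$, so $d\theta = -e^{-u}du$. This turns the integral into
\[
\int_{\log(1/\varepsilon)}^\infty e^{-(1-\alpha)u}\,u^\beta\,du .
\]
We then split into cases.
\begin{itemize}
\item If $\alpha<1$, the exponential factor $e^{-(1-\alpha)u}$ dominates any polynomial $u^\beta$, so the integral converges for every $\beta\ge 0$. One can bound $u^\beta\le C_\delta e^{\delta u}$ with $\delta=(1-\alpha)/2$, or recognise an incomplete Gamma function.
\item If $\alpha=1$, the integrand is $u^\beta$ with $\beta\ge0$, so the integral diverges.
\item If $\alpha>1$, the integrand grows exponentially, so it diverges.
\end{itemize}
Together these give the ``if and only if $\alpha<1$''. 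The divergent cases need the lower bound $c>0$; the convergent case needs the upper bound $C$.

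For the second moment, we observe that $\theta^2\pi(\theta)\le \varepsilon^2\pi(\theta)$ on $(0,\varepsilon]$. Finiteness therefore follows immediately from the first part whenever $\alpha<1$. Alternatively, the same substitution gives the exponent $-(3-\alpha)u$, which also converges.

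For the specialisation, the horseshoe has $\alpha=0$, $\beta=1$. By \Cref{thm:cps}(b), near zero $\piH(\theta)$ is sandwiched between multiples of $\log(1+4/\theta^2)$ and $\log(1+2/\theta^2)$, each comparable to $-2\log\theta$ for $\theta\le\varepsilon$. This supplies the required two-sided constants, and the $\alpha<1$ case applies. The pure power pole $\abs{\theta}^{-\alpha}$ with $\alpha\ge1$ is the $\beta=0$ instance of the divergent cases.

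The main obstacle is the boundary case $\alpha=1$. There the conclusion rests entirely on the sign restriction $\beta\ge0$: with negative $\beta<-1$ the integral $\int^\infty u^\beta du$ would converge. We therefore have to use $\beta\ge 0$ explicitly, and make sure the asymptotic equivalence is strong enough to transfer divergence, which requires the lower bound to hold on a full neighbourhood of zero.
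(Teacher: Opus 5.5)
Your proposal is correct and follows the paper's route. Both reduce to the model integral $\int_0^\varepsilon \theta^{-\alpha}(-\log\theta)^\beta\,d\theta$ and read off convergence from the power exponent. Your substitution $\theta=e^{-u}$ makes rigorous the paper's remark that the slowly varying log factor cannot change the verdict, including at $\alpha=1$, where you correctly identify $\beta\ge 0$ as what forces divergence; your bound $\theta^2\pi(\theta)\le\varepsilon^2\pi(\theta)$ is a cleaner replacement for the paper's exponent count in the second-moment step.
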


\begin{proof}
For the prior to be normalisable near zero, we need $\int_0^\varepsilon \theta^{-\alpha}(-\log\theta)^\beta\, d\theta < \infty$, which requires $\alpha < 1$ (the logarithmic factor is slowly varying and does not affect the exponent). When $\alpha < 1$, the near-zero second moment $\int_0^\varepsilon \theta^{2-\alpha}(-\log\theta)^\beta\, d\theta$ converges since $2 - \alpha > 1 > 0$.
\end{proof}

\begin{remark}\label{rem:global-variance}
The horseshoe's \emph{global} variance $\int_{-\infty}^\infty \theta^2\,\piH(\theta)\, d\theta$ is infinite, because the Cauchy-like tail $\piH(\theta) \asymp 2K/\theta^2$ makes $\int_1^\infty \theta^2 \cdot \theta^{-2}\, d\theta$ diverge. The horseshoe therefore does not satisfy the classical Cram\'er condition (finite moment generating function). The MDP analysis depends on the prior's \emph{local} behaviour near the testing boundary $\abs{\theta} \sim \sqrt{\log n}$, where the horseshoe density is $O(1/\log n)$---well-behaved. The infinite global variance is a feature, not a defect: it is the $1/\theta^2$ tail that ensures signals above the MDP threshold are left unshrunk. The log-pole at the origin and the heavy tail are complementary mechanisms serving different roles in the MDP optimality.
\end{remark}

To verify the horseshoe case explicitly, compute the near-zero second moment using the upper bound from \Cref{thm:cps}:
\begin{equation}\label{eq:variance-integral}
  \int_0^\varepsilon \theta^2 \cdot K\log\!\left(1 + \frac{2}{\theta^2}\right) d\theta
  \leq 2K\int_0^\varepsilon \theta^2 \log(1/\theta)\, d\theta
  = 2K\!\left[\frac{\theta^3}{3}\log(1/\theta)\right]_0^\varepsilon + O(\varepsilon^3)
  < \infty.
\end{equation}
The integral converges: $\theta^2 \cdot (-\log\theta) \to 0$ as $\theta \to 0$, so the log singularity is integrable against the $\theta^2$ weight. The log-pole is thus the strongest singularity at zero for which the Bayes risk integral near zero remains finite---the origin integrability boundary (\Cref{prop:cramer-boundary}).

Contrast with a prior having a power-law pole $\pi(\theta) \sim \abs{\theta}^{-1}$ at zero, which is not normalisable and hence inadmissible. And contrast with the Lasso, $\pi(\theta) \propto e^{-\abs{\theta}}$, which has bounded density at zero---its Bayes risk near zero is finite, but it fails the necessary condition for ABOS (\Cref{thm:ps-nec}).

The \textbf{log-pole is the unique singularity level that simultaneously satisfies both constraints}: unbounded density at zero (required for super-efficiency) and normalisability near zero (required for finite Bayes risk). This is the precise sense in which the horseshoe is the canonical sparse prior for MDP-optimal testing.

\paragraph{Exact MDP constant from the log-pole.}
The MDP threshold $\tcrit = \sqrt{\log(\pi n/2)}$ carries the constant $K = (2\pi^3)^{-1/2}$ from \Cref{thm:cps}. At the MDP boundary $t_n = \tcrit$, the Type~I error equals the prior probability of undetectable signals:
\begin{equation}\label{eq:equiboundary}
  \underbrace{P_0(\abs{Y} > t_n)}_{\text{Type~I}}
  = \underbrace{\piH([-t_n, t_n])}_{\text{mass of prior near zero}}.
\end{equation}
The prior mass in $[-t_n, t_n]$ under the horseshoe is approximately:
\[
  \int_{-t_n}^{t_n} K\log(1/\abs{\theta})\, d\theta
  \approx 2K t_n \log(1/t_n).
\]
The Type~I error at threshold~$t_n$ is:
\[
  P_0(\abs{Y} > t_n)
  \approx \frac{2\phi(t_n)}{t_n}
  = \frac{\sqrt{2/\pi}}{t_n}\exp(-t_n^2/2).
\]
Setting these equal and solving for~$t_n$:
\begin{equation}\label{eq:equiboundary-solve}
  \frac{\sqrt{2/\pi}}{t_n}\, e^{-t_n^2/2}
  = 2K\, t_n \log(1/t_n).
\end{equation}
At leading order, $e^{-t_n^2/2} \asymp 1/n$, so $t_n^2 \asymp \log n$. The sub-leading correction from $\log(1/t_n) \asymp \frac{1}{2}\log\log n$ is negligible at leading order, and solving explicitly gives:
\begin{equation}\label{eq:exact-threshold}
  t_n^2 = \log\!\left(\frac{\pi n}{2}\right) + O(\log\log n),
\end{equation}
matching the exact constant $\tcrit = \sqrt{\log(\pi n/2)}$ from \citet{datta2026newlook}. The $\pi$ in the constant comes directly from the normalisation constant $K = (2\pi^3)^{-1/2}$ in the log-pole bound.

\subsection{Channel 2: Super-Efficiency and the MDP Detection Zone}\label{sec:channel2}

The super-efficiency theorem (\Cref{thm:super-eff}) and the MDP threshold together partition the real line into two regions.

Below the threshold ($\abs{\theta} < \tcrit$), super-efficiency applies: the horseshoe identifies these as null coordinates with KL risk $O(\tau^4)$, which is $o(1/n)$, and no signal is detectable. Above the threshold ($\abs{\theta} > \tcrit$), the horseshoe leaves signals unshrunk due to tail robustness; the posterior mean $\hat{\theta}_i \approx y_i$ for $\abs{y_i} \gg \tcrit$, giving KL risk $O(1/n)$---the standard parametric rate.

The MDP threshold $\tcrit$ is precisely the \textbf{equiboundary} where super-efficiency transitions to standard efficiency. Above it, the horseshoe behaves like a shrinkage-free estimator; below it, it achieves sub-parametric KL risk. This partition is a direct consequence of the log-pole bound. For $\abs{y_i} \ll \tcrit$, the posterior concentrates near zero because $\piH(0) = +\infty$ dominates the likelihood, giving $\kappa_i \approx 1$ and super-efficient shrinkage. For $\abs{y_i} \gg \tcrit$, the Cauchy-like tail of~$\piH$ prevents excessive shrinkage, giving $\kappa_i \approx 0$ and robust estimation.

The transition at $\abs{y_i} = \tcrit$ can be made precise through the posterior shrinkage. At the threshold, the posterior expectation of the shrinkage weight satisfies $\E[\kappa_i \mid y_i = \tcrit, \tau] = 1/2$. To see this, note that the posterior odds for shrinkage versus no-shrinkage are determined by the ratio of the prior density at zero to the prior density at the observation: $\piH(0)/\piH(\tcrit)$. At $\abs{y_i} = \tcrit$, the log-pole gives $\piH(\tcrit) \asymp K\log(1/\tcrit^2) \asymp K\log\log n$, while $\piH(0) = +\infty$. The likelihood ratio $N(y_i; 0, \sigma^2)/N(y_i; y_i, \sigma^2) = \exp(-y_i^2/(2\sigma^2))$ decays as $\exp(-\log(\pi n/2)/2) = \sqrt{2/(\pi n)}$, which exactly balances the prior's infinite density at zero against the likelihood's exponential decay, producing $\E[\kappa_i] = 1/2$ at the threshold.

Quantitatively, the KL risk as a function of $\abs{y_i}$ satisfies:
\begin{equation}\label{eq:kl-partition}
  \KL(f_{y_i} \| \hat{f})
  \approx \begin{cases}
    C\tau^4/y_i^2 & \abs{y_i} \ll \tcrit, \\
    C/(y_i^2/\sigma^2) & \abs{y_i} \gg \tcrit,
  \end{cases}
\end{equation}
where the transition occurs at $\abs{y_i} = \tcrit \asymp \sqrt{\log n}$. Integrating over the prior yields the risk decomposition:
\begin{equation}\label{eq:risk-decomp}
  r(\piH, \mathrm{HS})
  = \underbrace{\int_{\abs{\theta} < \tcrit} O(\tau^4)\, d\piH(\theta)}_{\text{super-efficient region}}
  + \underbrace{\int_{\abs{\theta} > \tcrit} O(1/n)\, d\piH(\theta)}_{\text{MDP signal region}}.
\end{equation}
The first integral is $O\bigl(\tau^4 \cdot \piH([-\tcrit, \tcrit])\bigr) = O(\tau^4 \cdot \tcrit \log(1/\tcrit)) = O(\tau^4 \log n / \sqrt{\log n})$, which is negligible. The second integral, over the $p_0$ signal coordinates with prior mass above~$\tcrit$, gives the ABOS Bayes risk $O(p_0 \log(p/p_0)/n)$. The negligibility of the first integral is precisely the content of super-efficiency: null coordinates contribute asymptotically nothing to the total risk, so the entire risk budget is allocated to signal coordinates.

\subsection{Channel 3: Clarke--Barron and the Logarithmic Budget}\label{sec:channel3}

The \citet{clarke1990information} theorem on information-theoretic asymptotics of Bayes methods provides the overarching framework.

\begin{theorem}[Clarke--Barron 1990]\label{thm:cb}
For a $d$-dimensional parametric family with prior~$\pi$, the cumulative KL risk of the Bayes predictive satisfies:
\begin{equation}\label{eq:clarke-barron}
  \sum_{i=1}^n \E\!\left[\KL(P_{\theta_0} \| \hat{P}_i)\right]
  = \frac{d}{2}\log n + O(1),
\end{equation}
where $\hat{P}_i$ is the predictive after $i-1$ observations.
\end{theorem}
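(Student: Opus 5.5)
The plan is the classical route of \citet{clarke1990information}: turn the cumulative KL risk into a single Bayes mixture quantity, then evaluate that quantity by a Laplace expansion. Throughout we read the statement under its implicit standing assumptions. The family $\{P_\theta\}$ is smooth and identifiable, with Fisher information $I(\theta_0)$ that is continuous and positive definite. The prior density $\pi$ is continuous and positive at $\theta_0$. The posterior is consistent, meaning there are uniformly consistent tests of $\theta_0$ against $\{\abs{\theta-\theta_0}>\delta\}$. We also take $\hat P_i$ to be the predictive for observation $i$ given the first $i-1$ observations.

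\textbf{Step 1 (chain rule).} Let $m_n(x^n)=\int \prod_{i\le n} p_\theta(x_i)\,\pi(\theta)\,d\theta$ be the Bayes marginal. Then $m_n(x^n)=\prod_{i} \hat p_i(x_i\mid x^{i-1})$. Taking expectations under $P_{\theta_0}^n$ and applying the chain rule for relative entropy gives
\[
\sum_{i=1}^n \E\!\left[\KL(P_{\theta_0}\|\hat P_i)\right] = \KL\bigl(P^n_{\theta_0}\,\big\|\, M_n\bigr) = \E_{\theta_0}\!\left[\log \frac{\prod_i p_{\theta_0}(X_i)}{m_n(X^n)}\right].
\]
The problem is therefore reduced to an asymptotic expansion of the log-ratio of the true likelihood to the Bayes mixture.

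\textbf{Step 2 (Laplace expansion).} Split the mixture integral into a neighbourhood $\abs{\theta-\theta_0}<\delta$ and its complement.

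On the neighbourhood, Taylor-expand the log-likelihood ratio about $\theta_0$:
\[
\ell_n(\theta)-\ell_n(\theta_0)=(\theta-\theta_0)^\top S_n-\tfrac n2(\theta-\theta_0)^\top I(\theta_0)(\theta-\theta_0)(1+o(1)),
\]
where $S_n$ is the score. Integrate the resulting Gaussian against $\pi(\theta)\approx\pi(\theta_0)$. This yields
\[
\log\frac{m_n}{p^n_{\theta_0}}=-\frac d2\log\frac{n}{2\pi}+\log\pi(\theta_0)-\tfrac12\log\det I(\theta_0)+\tfrac{1}{2n}S_n^\top I(\theta_0)^{-1}S_n+o_P(1).
\]
On the complement, the consistent tests give an exponentially small contribution.

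\textbf{Step 3 (take expectations).} Since $\E\bigl[\tfrac1n S_n^\top I^{-1}S_n\bigr]=d$, we obtain
\[
\KL(P^n_{\theta_0}\|M_n)=\frac d2\log\frac{n}{2\pi e}+\tfrac12\log\det I(\theta_0)+\log\frac1{\pi(\theta_0)}+o(1)=\frac d2\log n+O(1),
\]
which is the claim.

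\textbf{Main obstacle.} The expansion in Step 2 holds only in probability, but Step 3 needs convergence of expectations. I would proceed in two parts. The lower bound on the KL is the easy direction: restrict the mixture to the $\delta$-ball, lower-bound the integral there, and apply Jensen. For the upper bound I would control the bad events, namely a large score or failure of the tests. The first tools to try are a uniform integrability bound on $S_n^\top S_n/n$, which follows from a finite second moment of the score, and the crude bound $\log(p^n_{\theta_0}/m_n)\le \log(p^n_{\theta_0}/\hat p)$ on those events, together with the exponential decay of their probabilities. For the horseshoe setting used later in the paper, the key point is that the constant term contains $\log(1/\pi(\theta_0))$. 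At a null coordinate this term becomes $-\infty$ because of the log pole $\piH(0)=\infty$. That is the super-efficiency phenomenon of \Cref{thm:super-eff}, and it breaks the $O(1)$ remainder; the statement as given therefore applies only at $\theta_0$ where the prior density is positive and finite.
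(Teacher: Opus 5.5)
The paper gives no proof of this statement; it quotes it from \citet{clarke1990information}. Your Steps 1--3 (chain rule, Laplace expansion with the score term, $\E[S_n^\top I(\theta_0)^{-1}S_n]=nd$) are a correct sketch of that paper's argument, including the sharp constant. You are also right that the printed statement omits the hypotheses it needs.

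The gap is in the ``main obstacle'' paragraph, which is where all the rigour lives: there the two directions are interchanged. Since $\KL(P^n_{\theta_0}\|M_n)=\E_{\theta_0}\log(p^n_{\theta_0}/m_n)$, restricting the mixture to a ball $B$ and lower-bounding the integral there bounds $m_n$ from \emph{below}, and hence the KL from \emph{above}. With Jensen it reads $\KL(P^n_{\theta_0}\|M_n)\le-\log\pi(B)+n\sup_{\theta\in B}\KL(P_{\theta_0}\|P_\theta)$. This is indeed the easy half, provided the ball has radius $\asymp n^{-1/2}$; on a fixed $\delta$-ball the second term is of order $n\delta^2$. With that radius it gives $\tfrac d2\log n+O(1)$ directly and needs neither tests nor any control of the score. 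Your crude bound is also reversed: with $\hat p=\sup_\theta p^n_\theta$ one has $m_n\le\hat p$, so $\log(p^n_{\theta_0}/m_n)\ge\log(p^n_{\theta_0}/\hat p)$, which is useless for an upper bound on the KL. The plan you describe for the hard half therefore does not go through as written.

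The hard half is the \emph{lower} bound on the KL, i.e.\ an upper bound on $\E_{\theta_0}\log(m_n/p^n_{\theta_0})$. Split $m_n=m_n(B_\delta)+m_n(B_\delta^c)$.
\begin{itemize}
\item On the event where the observed information is $\succeq n(1-\epsilon)I(\theta_0)$ uniformly on $B_\delta$, Taylor expansion gives $\log\bigl(m_n(B_\delta)/p^n_{\theta_0}\bigr)\le-\tfrac d2\log n+C+\tfrac{1}{2n(1-\epsilon)}S_n^\top I(\theta_0)^{-1}S_n$. The last term has expectation exactly $d/(2(1-\epsilon))$, so no uniform integrability is needed.
\item For the complement, with tests $\phi_n$ one has $\E_{\theta_0}\bigl[(1-\phi_n)\,m_n(B_\delta^c)/p^n_{\theta_0}\bigr]\le\sup_{\theta\notin B_\delta}\E_\theta(1-\phi_n)$. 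This is exponentially small once the uniformly consistent tests are upgraded to exponentially consistent ones, which is standard in the i.i.d.\ case.
\item On the remaining bad event $A$, no moment condition on $\hat p$ is needed. By Jensen and $\E_{\theta_0}[m_n/p^n_{\theta_0}]\le1$, $\E_{\theta_0}\bigl[\log(m_n/p^n_{\theta_0})\mathbf{1}_A\bigr]\le P_{\theta_0}(A)\log\bigl(1/P_{\theta_0}(A)\bigr)$.
\end{itemize}
Note, however, that the good-event bound carries $-\tfrac d2\log n$ only on $A^c$. You therefore need $P_{\theta_0}(A)\log n=O(1)$, not merely $P_{\theta_0}(A)\to0$. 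This is what Clarke and Barron's square-integrable local domination of the second derivatives buys: Chebyshev gives $P_{\theta_0}(A)=O(1/n)$.

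Finally, your log-pole remark reaches the right conclusion for the wrong reason. The KL is nonnegative, so nothing becomes $-\infty$. Take a fixed prior with $\pi(\theta)\asymp\log(1/\abs{\theta-\theta_0})$ and $d=1$. The prior mass of the $n^{-1/2}$-ball is $\asymp n^{-1/2}\log n$, and the cumulative KL is $\tfrac12\log n-\log\log n+O(1)$. So the $O(1)$ remainder fails, but the $\tfrac d2\log n$ term survives. This is not super-efficiency: the $O(\tau^4)$ rate of \Cref{thm:super-eff} comes from letting $\tau\to0$ with $n$, which lies outside the fixed-prior setting of this theorem.
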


In the sparse normal means model with $p$ coordinates of which $p_0$ are signal, the effective dimension is $d = p_0$, giving cumulative KL risk $\frac{p_0}{2}\log n + O(1)$. The \textbf{per-observation} KL risk is therefore $\frac{p_0 \log n}{2n}$---exactly the MDP Bayes risk rate $p_0 \log(p/p_0)/n$ when $p \asymp n$.

The Clarke--Barron result connects to the Polson--Scott bounds as follows. For a prior with density $\pi(\theta)$, the Clarke--Barron cumulative KL risk includes a term:
\begin{equation}\label{eq:self-info}
  -\log\pi(\theta_0) + \frac{d}{2}\log n + O(1),
\end{equation}
where the $-\log\pi(\theta_0)$ term is the \textbf{self-information of the true parameter}. For null coordinates ($\theta_0 = 0$), this term is $-\log\piH(0) = -\infty$ for the horseshoe---the prior assigns infinite density to the truth, so the Clarke--Barron bound predicts KL risk going to $-\infty$, which means the KL risk decays faster than any $1/n^c$: this is precisely super-efficiency.

For signal coordinates ($\theta_0 \neq 0$ with $\abs{\theta_0} \gg \tcrit$), the self-information is $-\log\piH(\theta_0) \approx \log\abs{\theta_0}^2$ (from the Cauchy tail), and the KL risk is $O(\log\abs{\theta_0}^2/n)$---bounded, standard parametric rate.

\begin{corollary}[Horseshoe redundancy in the sparse normal means model]\label{cor:redundancy}
For the horseshoe prior in the sparse normal means model with $p_0$ signals of size $\abs{\theta_0} \asymp \tcrit$, the per-observation Bayes redundancy (excess KL risk over the oracle who knows which coordinates are signal) satisfies:
\begin{equation}\label{eq:redundancy}
  R_n = \frac{p_0}{2n}\log n + o\!\left(\frac{p_0 \log n}{n}\right).
\end{equation}
The $o(\cdot)$ term absorbs contributions from null coordinates (super-efficient, contributing $O(\tau^4)$ each) and from the self-information correction at the boundary.
\end{corollary}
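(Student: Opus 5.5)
The plan is to work from the coordinatewise additivity of KL risk in the normal means model. Since the coordinates are independent given the local scales, the joint Bayes predictive factorises across $i$. The cumulative redundancy over $n$ observations is therefore a sum of per-coordinate cumulative redundancies. I would split this sum into the $p_0$ signal coordinates and the $p-p_0$ null coordinates and treat the two groups separately. The oracle benchmark is the predictive that knows the support, so it coincides with our predictive on nulls (both predict $N(0,\sigma^2)$) and is an unshrunk $d=p_0$ parametric Bayes predictive on the signals. The redundancy is then the excess cumulative KL, divided by $n$.

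For the signal coordinates, I would apply \Cref{thm:cb} with $d=1$ to each coordinate. This uses the refined form \eqref{eq:self-info}: the cumulative KL equals $\tfrac12\log n + \tfrac12\log(I(\theta_0)/2\pi e) - \log\piH(\theta_0) + o(1)$, with $I\equiv 1/\sigma^2$. Clarke--Barron needs a continuous positive prior density at $\theta_0$. This holds for $\abs{\theta_0}\asymp\tcrit$, because by \Cref{thm:cps} $\piH$ is smooth and strictly positive away from zero. I would also use the two-sided bounds \eqref{eq:logpole-bounds} to control the self-information, giving $-\log\piH(\theta_0)\asymp \log\tcrit^2 = O(\log\log n)$. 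Summing over the $p_0$ signals gives $\tfrac{p_0}{2}\log n + O(p_0\log\log n)$. Dividing by $n$ yields the leading term of \eqref{eq:redundancy} with remainder $O(p_0\log\log n/n)=o(p_0\log n/n)$.

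For the null coordinates, I would use \Cref{thm:super-eff} in its refined form \eqref{eq:kl-refined}. Each null contributes per-observation KL $O(\tau^4\log^2(1/\tau))$, so the nulls together contribute $O(p\,\tau^4\log^2(1/\tau))$. With $\tau=p_0/p$ and $p\asymp n$, this is $O(p_0^4\log^2 n/n^3)$, which is $o(p_0\log n/n)$ whenever $p_0=o(\sqrt n)$. That condition holds in the stated sparse regime $p_0\ll\sqrt{p/\log p}$. If necessary, the cumulative version would be obtained by summing the per-step bound over $i\le n$, with $\tau$ fixed along the sequence.

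The main obstacle is uniformity. The Clarke--Barron expansion is pointwise in $\theta_0$, but here $\theta_0\asymp\tcrit$ drifts with $n$. The prior density there is also only of order $1/\log n$, so the $O(1)$ remainder must be shown to be $o(\log n)$ uniformly over this moving neighbourhood. My first approach would be the Laplace-approximation derivation of Clarke--Barron applied directly to the scale-mixture posterior. I would bound the variation of $\log\piH$ over a posterior window of width $n^{-1/2}$ around $\theta_0$. By the Cauchy-like tail, $\abs{(\log\piH)'(\theta)}=O(1/\abs{\theta})$ there, so this variation is $O(n^{-1/2}/\sqrt{\log n})$. That makes the pointwise remainder uniform and leaves only the $O(\log\log n)$ self-information term.
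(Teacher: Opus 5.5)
Your route is the paper's route: split signals from nulls, apply \Cref{thm:cb} with the self-information term on each signal, and use super-efficiency on the nulls. Your null step via \eqref{eq:kl-refined} is more defensible than the paper's appeal to $-\log\piH(0)=-\infty$, and your uniformity paragraph addresses something the paper ignores. The argument still has a genuine gap, though. The horseshoe has one global $\tau$, and your two halves use incompatible values of it. For the signals you read the self-information off \Cref{thm:cps} at $\tau=1$; for the nulls you set $\tau=p_0/p$. With that $\tau$, the rescaled prior is $\pi_\tau(\theta)=\tau^{-1}\piH(\theta/\tau)\approx 2K\tau/\theta^2$ for $\abs{\theta}\gg\tau$, by \eqref{eq:cauchy-tail}. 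So a signal at $\abs{\theta_0}\asymp\tcrit$ has
\[
  -\log\pi_\tau(\theta_0)=\log(p/p_0)+\log\theta_0^2+O(1).
\]
With $p\asymp n$ and $p_0=o(\sqrt n)$ this is at least $(\tfrac12+o(1))\log n$. Your ``remainder'' $\sum_j -\log\pi_\tau(\theta_{0j})$ is therefore at least as large as the claimed leading term, not $O(p_0\log\log n)$. Keeping $\tau$ of order one instead restores $O(\log\log n)$ self-information. But then the null bound $O(\tau^4\log^2(1/\tau))$ is $O(1)$ per null and says nothing against $p_0\log n/n$ once summed over $p-p_0$ nulls. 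The signal half needs $\log(1/\tau)=o(\log n)$, while the null half, even granting the bound you quote, needs $\tau$ polynomially small (roughly $\tau\lesssim n^{-1/4}$ up to logarithms). No single $\tau$ does both. Separately, in the $n$-replicate setting that Clarke--Barron requires, you cannot sum the single-observation bound of \Cref{thm:super-eff} over $i\le n$ unchanged. After $i-1$ replicates the prior-to-noise scale is $\tau\sqrt{i-1}/\sigma$, not $\tau$.

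Second, your benchmark does not match your computation. You define the oracle as a $d=p_0$ parametric Bayes predictive on the support. By the same expansion its cumulative KL is $\tfrac{p_0}{2}\log n+O(p_0)$, and you never subtract it. Subtracting it cancels the $\tfrac{p_0}{2}\log n$ and leaves only differences of self-informations, so under your own definition the stated leading term is absent. That term survives only in two cases: if redundancy is measured against the true source $P_{\theta_0}$ (the Clarke--Barron sense), or if the oracle is charged only its step-$n$ risk $p_0/(2n)$, as the first line of the paper's proof implicitly does. The paper's proof has the same two defects. Its displayed identity contains only self-information terms, so dividing by $n$ yields $O(p_0\log\log n/n)$ rather than \eqref{eq:redundancy}. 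It also evaluates $-\log\piH(\theta_{0j})$ at $\tau=1$, while the corollary's null claim ($O(\tau^4)$ each) needs $\tau\to0$. Following the paper more closely will not close the gap. With one global $\tau$, the $\log(p/p_0)$ part of each signal's self-information is of order $\log n$ and cannot be absorbed into the $o(\cdot)$ term.
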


\begin{proof}
The oracle who knows the signal set achieves per-observation KL risk $p_0/(2n)$ (the parametric rate for $p_0$ parameters). The horseshoe's cumulative redundancy over the oracle is, by Clarke--Barron:
\[
  \sum_{i=1}^n \Bigl(\E[\KL(P_{\theta_0} \| \hat{P}_i^{\mathrm{HS}})] - \E[\KL(P_{\theta_0} \| \hat{P}_i^{\mathrm{oracle}})]\Bigr)
  = \sum_{j \in \text{signal}} \bigl(-\log\piH(\theta_{0j})\bigr) + \sum_{j \in \text{null}} \bigl(-\log\piH(0)\bigr) + O(1).
\]
The null-coordinate sum is $-\infty$ (each term is $-\infty$), but this merely reflects that the horseshoe \emph{outperforms} the oracle on null coordinates. The signal-coordinate sum is $\sum_{j \in \text{signal}} \log\abs{\theta_{0j}}^2 \asymp p_0 \cdot \log\log n$ at the MDP boundary. Dividing by~$n$ gives the per-observation redundancy~\eqref{eq:redundancy}.
\end{proof}

The \textbf{logarithmic budget} interpretation: the total KL risk $p_0 \log n / n$ is the sum of $p_0$ signal coordinates each contributing $\log n / n$. The horseshoe allocates zero budget to null coordinates (super-efficiency) and the full $\log n / n$ budget per signal coordinate. This allocation is enforced by the log-pole: null coordinates have $-\log\piH(0) = -\infty$ (zero self-information, infinite density at truth), and signal coordinates have $-\log\piH(\theta_0) \asymp \log\abs{\theta_0}^2 \asymp \log\log n$ at the MDP boundary $\abs{\theta_0} \asymp \tcrit$.

The MDP threshold $\tcrit = \sqrt{\log(\pi n/2)}$ is therefore the \textbf{Clarke--Barron self-information equiboundary}: it is where $-\log\piH(\theta_0) = \log n/2 + \log(\pi/2)$, i.e., where the prior's self-information equals half the $\log n$ budget from Fisher information. Below the threshold, the prior overwhelms the likelihood (infinite density at zero); above it, the likelihood dominates (Cauchy tail is informationally weak). The MDP threshold is exactly where these two forces balance.

\section{The $\kappa$-Scale: A Unified View}\label{sec:kappa}

The Polson--Scott bounds, super-efficiency, and MDP optimality all admit a unified description in terms of the shrinkage weight $\kappa_i = 1/(1 + \lambda_i^2\tau^2)$.

The horseshoe prior induces a $\mathrm{Beta}(1/2, 1/2)$ distribution on~$\kappa_i$. To derive this, apply the transformation $\kappa = 1/(1+\lambda^2\tau^2)$ to the half-Cauchy density $p(\lambda) = 2/(\pi(1+\lambda^2))$ with $\tau = 1$. The Jacobian is $\abs{d\lambda/d\kappa} = (1-\kappa)^{-1/2}\kappa^{-3/2}/(2\tau)$. Substituting:
\begin{align}
  p(\kappa) &= p(\lambda)\, \abs{\frac{d\lambda}{d\kappa}}
  = \frac{2}{\pi(1 + (1-\kappa)/(\kappa\tau^2))} \cdot \frac{(1-\kappa)^{-1/2}\kappa^{-3/2}}{2\tau} \nonumber\\
  &= \frac{1}{\pi} \kappa^{-1/2}(1-\kappa)^{-1/2}
  \qquad\text{for } \kappa \in (0,1), \label{eq:beta-derivation}
\end{align}
which is the $\mathrm{Beta}(1/2, 1/2)$ density---the arcsine distribution. This is the ``horseshoe-shaped'' density that gives the prior its name: it has infinite mass near both $\kappa = 0$ and $\kappa = 1$, with a minimum at $\kappa = 1/2$.

Near $\kappa_i = 1$ (total shrinkage, null coordinates), $p(\kappa_i) \propto (1-\kappa_i)^{-1/2}$ is unbounded---the $\kappa$-scale translation of the log-pole, which puts infinite density near total shrinkage and guarantees super-efficiency for nulls. Near $\kappa_i = 0$ (no shrinkage, signal coordinates), $p(\kappa_i) \propto \kappa_i^{-1/2}$ is also unbounded---the $\kappa$-scale translation of the heavy tail, which puts infinite density near zero shrinkage and guarantees tail robustness for signals. At $\kappa_i = 1/2$ (the decision boundary), $p(1/2) = 2/\pi$ is a finite, specific value. The testing rule ``reject $H_{0i}$ if $\kappa_i < 1/2$'' corresponds exactly to the MDP threshold: $\kappa_i = 1/2$ iff $\lambda_i\tau = 1$ iff $\abs{y_i} \asymp \tau^{-1} \asymp p/p_0 \asymp \sqrt{\log(p/p_0)}$, which is the MDP threshold $\tcrit \asymp \sqrt{\log n}$.

The posterior distribution of $\kappa_i$ given $y_i$ further clarifies this partition. From~\eqref{eq:lambda-posterior} and the change of variables $\kappa = 1/(1+\lambda^2\tau^2)$, the posterior on~$\kappa_i$ is:
\begin{equation}\label{eq:kappa-posterior}
  p(\kappa_i \mid y_i, \tau)
  \propto \kappa_i^{-1/2}(1-\kappa_i)^{-1/2}
  \cdot \kappa_i^{1/2}
  \exp\!\left(-\frac{y_i^2 \kappa_i}{2\sigma^2}\right)
  \cdot \frac{1}{1 + (1-\kappa_i)/(\kappa_i\tau^2)}.
\end{equation}
For large $\abs{y_i}$, the exponential factor $\exp(-y_i^2\kappa_i/(2\sigma^2))$ sharply penalises $\kappa_i$ near~$1$, so the posterior concentrates near $\kappa_i = 0$ (no shrinkage). For small $\abs{y_i}$, the exponential is nearly flat and the prior's pole at $\kappa_i = 1$ dominates, concentrating the posterior near total shrinkage. The crossover occurs at $y_i^2/(2\sigma^2) \approx \log(1/\tau^2) \approx \log(p/p_0)$, i.e., at $\abs{y_i} \approx \sqrt{2\log(p/p_0)} \asymp \tcrit$.

The concentration of the posterior on~$\kappa_i$ can be quantified through the posterior variance. For $\abs{y_i} \gg \tcrit$, the posterior on $\kappa_i$ concentrates near zero with variance $O(\exp(-y_i^2/(2\sigma^2)))$, while for $\abs{y_i} \ll \tcrit$ it concentrates near one with variance $O(\tau^2/y_i^2)$. At the threshold $\abs{y_i} = \tcrit$, the posterior is maximally uncertain about the shrinkage level, with variance $\Var[\kappa_i \mid y_i = \tcrit] \approx 1/8$---close to the maximum possible variance $1/4$ for a $[0,1]$-valued random variable. This maximal uncertainty at the decision boundary is a distinctive feature of the horseshoe: priors with bounded density at zero have posterior variance on $\kappa_i$ that is bounded away from the maximum, reflecting their inability to commit fully to either total shrinkage or no shrinkage.

The connection to Bayes factors makes the testing interpretation explicit. The posterior odds of $\kappa_i > 1/2$ versus $\kappa_i < 1/2$ can be expressed as a Bayes factor: $\kappa_i = 1/2$ corresponds to a local Bayes factor of~$1$ between the null hypothesis $H_0: \theta_i = 0$ and the alternative $H_1: \theta_i = y_i$. The horseshoe's arcsine prior on~$\kappa_i$ assigns equal prior probability to $\kappa_i > 1/2$ and $\kappa_i < 1/2$ (by symmetry of the $\mathrm{Beta}(1/2,1/2)$ distribution), so the posterior probability that $\kappa_i > 1/2$ equals the posterior probability of $H_0$ in a Bayesian test with equal prior odds. The MDP threshold is thus the boundary where the Bayes factor equals one---the point of evidential equipoise.

The $\mathrm{Beta}(1/2, 1/2)$ distribution on~$\kappa_i$ is thus the \textbf{distributional encoding of the MDP equiboundary}: it places mass uniformly on $[0,1]$ in terms of the arcsine measure (the $\mathrm{Beta}(1/2, 1/2)$ is the arcsine distribution), so the horseshoe sees all shrinkage levels with equal prior probability. But via the $\kappa \leftrightarrow \theta$ mapping, this uniform arcsine distribution translates into the log-pole density near $\theta = 0$ and Cauchy tail density for large~$\abs{\theta}$.

\section{ABOS Theory and the Horseshoe+ Prior}\label{sec:abos}

The moderate deviation framework yields the full ABOS (Asymptotically Bayes Optimal under Sparsity) property as a direct consequence. We state the oracle Bayes risk, the ABOS theorem, and compare the horseshoe and horseshoe+ priors.

\subsection{Oracle Bayes Risk}\label{sec:oracle-risk}

The risk balance condition~\eqref{eq:saddle-point}, combined with the moderate deviation lemma, determines the oracle Bayes risk.

\begin{theorem}[Oracle Bayes Risk]\label{thm:oracle-risk}
In the sparse normal means model with $p_0 = o(n)$, the oracle Bayes risk under zero-one loss satisfies:
\begin{equation}\label{eq:oracle-risk}
  R_n^* = \frac{p_0}{n} \cdot \frac{1}{\sqrt{\pi\log(n/p_0)}} \cdot (1 + o(1)).
\end{equation}
This rate is the testing (Bayes risk) rate; it differs from the minimax $\ell_2$ estimation rate $p_0\log(n/p_0)/n$ by a factor of $\sqrt{\log(n/p_0)}$.
\end{theorem}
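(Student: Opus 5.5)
The plan is to evaluate the Bayes risk~\eqref{eq:bayes-risk} at the risk-balancing threshold determined by~\eqref{eq:saddle-point}, and to use the Mill's ratio approximation in the moderate deviation regime for both error terms. Write the risk per coordinate, multiply by $n$ coordinates (with $p=n$), and normalise by $n$ as in the ABOS convention. The oracle knows the sparsity proportion $p_0/n$ and the signal distribution. We take the signal distribution to be calibrated so that alternatives sit at the moderate deviation scale, in the sense of \Cref{thm:mdp}, with $n\lambda_n^2 \asymp \log n$. With sparsity $p_0/n$, the prior odds are $n/p_0$, so the balance equation~\eqref{eq:saddle-point} reads $\phi(t)\approx (p_0/n)\,c_\pi$ with a prior-dependent constant $c_\pi$. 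This gives $t^2 = 2\log(n/p_0) + O(\log\log(n/p_0))$. We will keep the correction term explicit, since it feeds the constant in~\eqref{eq:oracle-risk}.

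\textbf{Step 1 (Type~I contribution).} By Mill's ratio, $(1-p_0/n)P_0(\abs{Y}>t)= 2\phi(t)/t\,(1+o(1))$. We then substitute $\phi(t)\asymp p_0/n$ from the balance equation and $t\sim\sqrt{2\log(n/p_0)}$. The Type~I term is then $\frac{p_0}{n}\cdot\frac{2\sqrt{2\pi}^{-1}\cdots}{\sqrt{2\log(n/p_0)}}$. We will track the constant carefully, because $2\phi(t)/t$ with $\phi(t)=e^{-t^2/2}/\sqrt{2\pi}$ and $t=\sqrt{2L}$, where $L=\log(n/p_0)$, yields a prefactor of the form $1/\sqrt{\pi L}$. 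This is exactly the claimed $1/\sqrt{\pi\log(n/p_0)}$, provided $e^{-t^2/2}$ is matched to $p_0/n$ with unit constant.

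\textbf{Step 2 (Type~II contribution).} Next we show that the missed-signal term $(p_0/n)\int P_\theta(\abs{Y}\le t)\,d\pi(\theta)$ is of the same or smaller order. Signals above the threshold are detected with probability $1-o(1)$. Signals near the boundary contribute a window of width $O(1/t)$ in $\theta$, weighted by the prior density at $t$. For the horseshoe this density is $\asymp 2K/t^2$ by~\eqref{eq:cauchy-tail}. The missed-signal term is therefore $(p_0/n)\cdot O(1/t)$, of matching order $1/\sqrt{\log(n/p_0)}$.

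\textbf{Step 3.} We add the two contributions and optimise the constant by first-order conditions. At the optimum, the derivative condition~\eqref{eq:saddle-point} makes the two terms combine with the stated constant. Finally we check that $p_0=o(n)$ ensures $L\to\infty$, so that all $(1+o(1))$ factors are legitimate, and that the $O(\log\log)$ corrections in $t^2$ change only the $o(1)$.

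The main obstacle is the exact constant $1/\sqrt{\pi}$. It depends on absorbing the prior-dependent constant $c_\pi$ and the $\log\log$ shift in $t^2$ so that $e^{-t^2/2}\sqrt{\cdot}$ matches $p_0/n$ precisely. The danger is that the Type~II term contributes at the same order and alters the constant. I would first attempt to show that, at the optimal $t$, the Type~II term is $o$ of the Type~I term, by choosing the signal prior's local scaling as in \Cref{thm:mdp}. If that fails, I would instead compute both terms exactly via the uniform moderate deviation lemma and verify that their sum reproduces $1/\sqrt{\pi L}$.
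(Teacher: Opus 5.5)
There is a genuine gap, and it sits exactly where you flagged danger: the constant. Your Step~1 produces the formula only at $t^2=2L$ exactly, with $L=\log(n/p_0)$. Mill's ratio gives
\[
2\bar\Phi(t)=\frac{p_0}{n}\,\frac{e^{-\delta_n/2}}{\sqrt{\pi L}}\,(1+o(1)) \qquad \text{for } t^2=2L+\delta_n,\ \delta_n=o(L).
\]
So the stated constant requires $\delta_n\to 0$. Your Step~3 claim that an $O(\log\log(n/p_0))$ shift in $t^2$ ``changes only the $o(1)$'' is therefore false. An $O(1)$ shift coming from $c_\pi$ changes the constant, and a $\log L$ shift changes the power of $L$. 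Concretely, feed the tail density $\piH(t)\asymp 2K/t^2$ that you invoke in Step~2 into the balance equation~\eqref{eq:saddle-point}. This gives $t^2=2L+2\log(2L)+O(1)$, and the Type~I term becomes of order $(p_0/n)L^{-3/2}$, not $(p_0/n)L^{-1/2}$. Your proviso ``provided $e^{-t^2/2}$ is matched to $p_0/n$ with unit constant'' is the whole content of the claim, and nothing in the proposal supplies it.

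Step~2 also fails, because it counts only signals in a window near $t$. It ignores signals with $\abs{\theta}<t-O(1)$, which are missed with probability $1-o(1)$. For a horseshoe (or any heavy-tailed) slab, the missed-signal term is $(p_0/n)\,\piH([-t,t])\,(1+o(1))\asymp p_0/n$, a factor $\sqrt{L}$ larger than the target. The $O(1/t)$ you found is the \emph{detected} fraction $\piH(\abs{\theta}>t)\asymp 4K/t$, not the missed one. Even your window estimate does not give $O(1/t)$: width times density $2K/t^2$ is $O(t^{-2})$ or smaller. Step~3 cannot repair the constant either, since~\eqref{eq:saddle-point} equates densities at $t$, not error masses.

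The two requirements you need are $t^2=2L+o(1)$ and a Type~II term that is $o\bigl((p_0/n)L^{-1/2}\bigr)$. In the natural signal models these pull against each other. For point-mass alternatives $\pm A\sqrt{2L}$, the oracle threshold is $\sqrt{2L}\,(A^2+1)/(2A)\,(1+o(1))$. The oracle risk is then $\asymp p_0/n$ for $A\le 1$, but only $L^{-1/2}(p_0/n)^{(A^2+1)^2/(4A^2)}=o\bigl((p_0/n)L^{-1/2}\bigr)$ for $A>1$. In neither case does it equal the stated rate.

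The paper itself gives no derivation beyond pointing to the balance condition and the moderate deviation lemma. In its ABOS argument it uses $R_n^*$ simply as $P_0(\abs{Y}>\sqrt{2\log(n/p_0)})$. Under that reading, your Step~1 with $t^2=2L$ exactly is the entire argument, and Steps~2--3 are neither needed nor correct. A genuine oracle-risk theorem would need an explicit signal model for which you verify both requirements above.

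Finally, the statement's closing comparison is off: $(p_0L/n)\big/\bigl((p_0/n)/\sqrt{\pi L}\bigr)=\sqrt{\pi}\,L^{3/2}$, not $\sqrt{L}$.
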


The distinction between testing and estimation rates is important: the testing rate~\eqref{eq:oracle-risk} measures misclassification probability, while the estimation rate measures mean squared error. The testing rate is always smaller by the factor $1/\sqrt{\log(n/p_0)}$, reflecting that binary classification is an easier task than point estimation.

\subsection{The ABOS Property}\label{sec:abos-property}

A testing rule $\delta_n$ is \textbf{Asymptotically Bayes Optimal under Sparsity} (ABOS) if $R_n(\delta_n)/R_n^* \to 1$ as $n \to \infty$ with $p_0 = o(n)$. This is the strongest possible asymptotic optimality criterion for sparse testing: it requires not just rate-optimality but convergence of the leading constant to one.

\begin{theorem}[ABOS for the horseshoe]\label{thm:abos}
Let $\tau_n = c \cdot p_0/n$ for any constant $c > 0$. The horseshoe testing rule $\delta^{\mathrm{HS}}$ with threshold $\tcrit = \sqrt{2\log(n/p_0)}$ satisfies:
\begin{equation}\label{eq:abos}
  R_n(\delta^{\mathrm{HS}}) / R_n^* \to 1 \qquad\text{as } n \to \infty,
\end{equation}
with ABOS constant bounded by $c_{\mathrm{HS}} \leq \sqrt{e/(e-1)} \approx 1.31$ \citep{datta2013asymptotic,bogdan2011asymptotic}.
\end{theorem}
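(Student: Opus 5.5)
The plan follows the Bogdan et al.\ and Datta--Ghosh route: reduce the horseshoe rule to a thresholding rule on $\abs{y_i}$, then compare its Type~I and Type~II errors with those of the Bayes oracle behind \Cref{thm:oracle-risk}.

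\emph{Step 1: reduction to thresholding.} The horseshoe rule rejects $H_{0i}$ when $1-\E[\kappa_i \mid y_i,\tau_n] > 1/2$. Using the posterior \eqref{eq:kappa-posterior}, I will show that $y \mapsto \E[\kappa_i\mid y,\tau]$ is decreasing in $\abs{y}$. This follows by a monotone likelihood ratio argument, since the tilt $\exp(-y^2\kappa/2)$ is decreasing in $\kappa$. The rule therefore rejects exactly when $\abs{y_i} > t_n(\tau_n)$ for a unique crossing point $t_n(\tau_n)$. Locating this point is a Laplace-type computation on \eqref{eq:kappa-posterior}. For $\tau$ small, the mass near $\kappa=1$ contributes order one, while the mass near $\kappa=0$ contributes roughly $\tau\, e^{y^2/2}/y$ up to constants. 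Equating the two gives
\[
t_n^2 = 2\log(1/\tau_n) + O(\log\log(1/\tau_n)) = 2\log(n/p_0) + O(\log\log n)
\]
when $\tau_n = c\,p_0/n$. This is consistent with the stated threshold $\sqrt{2\log(n/p_0)}$, and the constant $c$ only enters the $O(1)$ correction.

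\emph{Step 2: error computation.} For a threshold of the form $t_n^2 = 2\log(n/p_0)+\delta_n$ with $\delta_n = o(\log n)$, I will bound the two error terms separately.
\begin{itemize}
\item Type~I: Mills' ratio gives a total of $n\cdot 2\phi(t_n)/t_n$.
\item Type~II: this is $p_0\int P_\theta(\abs{Y}\le t_n)\,d\pi(\theta)$, where under the two-groups model with slab variance $\psi^2$ the signal marginal is $N(0,1+\psi^2)$. It yields $p_0\,\Phi_{\text{slab}}$-type terms. Following \citet{bogdan2011asymptotic}, I will use their parametrisation $\psi^2 \asymp \log(n/p_0)$.
\end{itemize}
I then need to check that both terms match the oracle's, up to $1+o(1)$ factors. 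The oracle threshold solves the saddle-point equation \eqref{eq:saddle-point}, so the task is to show that the $O(\log\log n)$ discrepancy between $t_n^2$ and the oracle threshold changes the Type~II term only by $1+o(1)$. The Type~I term must be of lower or equal order.

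\emph{Main obstacle.} The obstacle is Step~2's constant matching. A shift of order $\log\log n$ in $t_n^2$ multiplies the Type~I error by a polylog factor. Exact ABOS therefore requires either that Type~I be negligible relative to $R_n^*$ in this regime, or that the remainder $\delta_n$ be pinned down to $o(1)$. I would first try to show the Type~I term is $o(R_n^*)$, invoking the Type~II dominance in \citet{datta2013asymptotic}.

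If this fails, the argument only delivers a bounded ratio. That is the origin of the stated constant $c_{\mathrm{HS}}\le\sqrt{e/(e-1)}$, and this weaker conclusion is what I expect to be rigorously attainable.
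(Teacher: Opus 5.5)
Your proposal does not prove the statement. It ends by expecting only a bounded ratio, which is strictly weaker than the asserted limit $R_n(\delta^{\mathrm{HS}})/R_n^*\to 1$.

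The stall comes from benchmarking against the wrong oracle. In Step~2 you adopt the Bogdan et al.\ Gaussian slab with $\psi^2\asymp\log(n/p_0)$, whose oracle risk is of order $p_0/n$ and dominated by Type~II error. The $R_n^*$ of \eqref{eq:oracle-risk} is a different object: by Mills' ratio it is exactly the null tail probability at $\tcrit$,
\[
  P_0(\abs{Y}>\tcrit)=\frac{2\phi(\tcrit)}{\tcrit}\,(1+o(1))
  =\frac{2}{\sqrt{2\pi}}\cdot\frac{p_0}{n}\cdot\frac{1+o(1)}{\sqrt{2\log(n/p_0)}}
  =\frac{p_0}{n}\cdot\frac{1+o(1)}{\sqrt{\pi\log(n/p_0)}}=R_n^*(1+o(1)).
\]
So in the paper's normalisation the Type~I term is all of $R_n^*$. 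Your plan to show it is $o(R_n^*)$ therefore runs the wrong way: combined with a negligible Type~II term it would send the ratio to $0$, not $1$.

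The paper's proof is this identity plus a tail bound. It takes $\delta^{\mathrm{HS}}$ to reject exactly when $\abs{y_i}>\tcrit$ (your $\delta_n=0$), so $\alpha_n^{\mathrm{HS}}=R_n^*(1+o(1))$. It then uses fixed-strength signals $\mu_n=A\tcrit$ with $A>1$, for which
\[
  \beta_n^{\mathrm{HS}}\le\bar\Phi\bigl((A-1)\tcrit\bigr)\le(p_0/n)^{(A-1)^2}=o\bigl(1/\sqrt{\log(n/p_0)}\bigr),
\]
hence $(p_0/n)\beta_n^{\mathrm{HS}}=o(R_n^*)$.

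Your Step~1 is actually sharper than the paper, and that is why it cannot be combined with \eqref{eq:oracle-risk}. The true posterior-mean crossing is $t_n^2=2\log(n/p_0)+2\log\log(n/p_0)+O(1)$. (The relative weight of the $\kappa\approx 0$ region is $\asymp\tau e^{y^2/2}/y^2$, not $\tau e^{y^2/2}/y$.) At that cutoff the Type~I error is only of order $R_n^*/\log(n/p_0)$. So the paper's matching genuinely requires $\delta_n=o(1)$, which it secures only by identifying the rule's cutoff with $\tcrit$.

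Conversely, inside the Bogdan model you chose, your ``main obstacle'' is not an obstacle. A positive shift $\delta_n=O(\log\log n)=o(\psi^2)$ leaves the Type~II term $2\Phi\bigl(t_n/\sqrt{1+\psi^2}\bigr)-1$ equal to the oracle's up to a factor $1+o(1)$. It also makes the Type~I term $O\bigl((p_0/n)\log^{-3/2}(n/p_0)\bigr)=o(p_0/n)$. Together these give ratio $\to 1$ against Bogdan's oracle, not merely a bounded ratio. Either way, you must commit to one signal model and one benchmark; Step~2 mixes the two, and that is what leaves the argument unfinished.
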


The proof follows from the Type~I and Type~II error concentration. For the Type~I error: under the horseshoe with $\tau_n \asymp p_0/n$, the local prior mass $\piH(0 \mid \tau_n) \asymp (n/p_0)\log(n/p_0)$ matches the moderate deviation scale, so the posterior signal probability $\hat{\pi}_i = P(\theta_i \neq 0 \mid y_i, \tau)$ crosses $1/2$ at $\abs{y_i} = \tcrit$, and $\alpha_n^{\mathrm{HS}} = P(\abs{Y_i} > \tcrit \mid \theta_i = 0) = R_n^*(1 + o(1))$. For the Type~II error: signals of strength $\mu_n = A\sqrt{2\log(n/p_0)}$ with $A > 1$ have $\beta_n^{\mathrm{HS}} = \Phi((1-A)\sqrt{2\log(n/p_0)}) = o(1)$. Combining:
\[
  R_n(\delta^{\mathrm{HS}}) = (1 - p_0/n)\alpha_n^{\mathrm{HS}} + (p_0/n)\beta_n^{\mathrm{HS}} = R_n^*(1+o(1)) + (p_0/n) \cdot o(1) = R_n^*(1+o(1)).
\]
The ABOS constant bound $\sqrt{e/(e-1)}$ comes from the framework of \citet{bogdan2011asymptotic}. When $A = 1$ (signals exactly at threshold), the Type~II error is $\Phi(0) = 1/2$, giving an irreducible boundary risk of $p_0/(2n)$ that no procedure can improve upon.

The connection to the \citet{donoho1994ideal} universal threshold $\sqrt{2\log n}$ is direct: when $p_0 = O(1)$, the ABOS threshold $\sqrt{2\log(n/p_0)} = \sqrt{2\log n} - O(\log p_0/\sqrt{\log n})$ reduces to the Donoho--Johnstone threshold. The ABOS derivation thus provides a Bayesian justification for what was originally a minimax estimation rule.

\subsection{The Horseshoe+ Prior}\label{sec:hsplus}

The horseshoe+ prior \citep{bhadra2017horseshoe} adds a second layer of half-Cauchy mixing:
\begin{equation}\label{eq:hsplus}
  \theta_i \mid \lambda_i, \tau \sim N(0, \lambda_i^2\tau^2), \quad
  \lambda_i \mid \eta_i \sim C^+(0, \eta_i), \quad
  \eta_i \sim C^+(0,1).
\end{equation}
The additional mixing strengthens the pole at the origin. The local prior mass satisfies:
\begin{equation}\label{eq:hsplus-mass}
  \pi_{\mathrm{HS+}}(0 \mid \tau) \asymp \frac{[\log(1/\tau)]^{3/2}}{\tau} \qquad\text{as } \tau \to 0,
\end{equation}
compared with $\pi_{\mathrm{HS}}(0 \mid \tau) \asymp \log(1/\tau)/\tau$ for the standard horseshoe. The extra factor $[\log(1/\tau)]^{1/2}$ translates into a smaller ABOS constant through faster KL posterior concentration \citep{bhadra2017horseshoe}:
\begin{equation}\label{eq:hsplus-kl}
  \KL(p_{\mathrm{HS+}} \| p_{\mathrm{oracle}}) \ll \KL(p_{\mathrm{HS}} \| p_{\mathrm{oracle}})
\end{equation}
at a rate $O(\log\log n / \log n)$ faster. The shrinkage coefficient prior for the horseshoe+ includes an additional Jacobian factor $J(\kappa) \propto \kappa^{-1/2}(1-\kappa)^{-1/2}$, creating a stronger U-shaped distribution on $\kappa \in (0,1)$ and sharper separation of signals from noise.

The practical advantage of horseshoe+ over horseshoe is largest in the \emph{ultra-sparse regime} where $p_0 = O(1)$ as $n \to \infty$. In this regime, $\log(n/p_0) \asymp \log n$, and the extra $[\log n]^{1/2}$ factor in the horseshoe+ local mass translates to a meaningfully smaller ABOS constant. When $p_0/n$ is a non-negligible fraction (say, $p_0 > 0.05n$), the two priors perform similarly and the computational simplicity of the standard horseshoe may be preferred.

\begin{table}[H]
\centering
\small
\renewcommand{\arraystretch}{1.3}
\begin{tabular}{@{}lll@{}}
\toprule
\textbf{Property} & \textbf{Horseshoe} & \textbf{Horseshoe+} \\
\midrule
Local mass at 0 & $\pi(0\mid\tau) \asymp \log(1/\tau)/\tau$ & $\pi(0\mid\tau) \asymp [\log(1/\tau)]^{3/2}/\tau$ \\
Optimal $\tau_n$ & $\asymp p_0/n$ & $\asymp p_0/n$ (same) \\
ABOS threshold & $\sqrt{2\log(n/p_0)}$ & Slightly lower by $O(\log\log n/\log n)$ \\
ABOS constant & $\leq \sqrt{e/(e-1)} \approx 1.31$ & Closer to 1; faster for $p_0 = O(1)$ \\
KL contraction & Near-minimax & Faster by $O(\log\log n/\log n)$ \\
$\tau$ sensitivity & Moderate & Lower (more robust) \\
\bottomrule
\end{tabular}
\caption{Comparison of the horseshoe and horseshoe+ priors for sparse testing.}
\label{tab:hs-comparison}
\end{table}

\section{Calibration of the Global Shrinkage Parameter $\tau$}\label{sec:tau}

The ABOS results above assume $\tau_n \asymp p_0/n$, but in practice $p_0$ is unknown. The calibration of~$\tau$ is therefore a central practical question. The testing problem is more fragile to $\tau$ miscalibration than the estimation problem, because decisions are hard thresholds rather than smooth shrinkage functions. We examine three approaches and characterise three regimes of inefficiency.

\subsection{Constrained Marginal Maximum Likelihood}\label{sec:mmle}

The MMLE maximises the marginal likelihood over the constrained interval $\tau \in [1/n, 1]$:
\begin{equation}\label{eq:mmle}
  \hat{\tau}_{\mathrm{MMLE}} = \operatorname*{arg\,max}_{\tau \in [1/n, 1]} m(Y \mid \tau).
\end{equation}
The constraint $[1/n, 1]$ is essential. Without it, the \textbf{Tiao--Tan phenomenon} \citep{tiao1965bayesian} causes the unconstrained MLE to collapse to $\tau = 0$ with positive probability, producing a degenerate estimator that shrinks all observations to zero. The lower bound $1/n$ corresponds to the assumption that at least one signal exists; the upper bound $1$ to at most all coordinates being signals.

\begin{theorem}[\citet{vanderpas2017adaptive}]\label{thm:mmle}
The constrained MMLE satisfies $\hat{\tau}_{\mathrm{MMLE}} \in [1/n, C\tau_n(p_0)]$ with $P_{\theta_0}$-probability tending to one, uniformly over $\theta_0 \in \ell_0[p_0]$. The horseshoe testing rule with $\tau = \hat{\tau}_{\mathrm{MMLE}}$ achieves near-minimax optimal Bayes risk adaptively over all sparsity levels.
\end{theorem}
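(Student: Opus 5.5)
The plan has two halves, matching the two clauses of \Cref{thm:mmle}: (i) a localisation statement for $\hat{\tau}_{\mathrm{MMLE}}$, and (ii) a plug-in argument transferring the fixed-$\tau$ ABOS analysis of \Cref{thm:abos} to the data-driven $\tau$. Throughout, write the log marginal likelihood as $M(\tau) = \sum_{i=1}^n \log m_\tau(y_i)$, where $m_\tau(y) = \int N(y;\theta,1)\,\piH(\theta\mid\tau)\,d\theta$. Then $\hat{\tau}_{\mathrm{MMLE}}$ maximises $M$ over $[1/n,1]$. The lower endpoint $1/n$ is automatic from the constraint, which is exactly why it is imposed given the Tiao--Tan collapse. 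So the content of (i) is the upper bound $\hat{\tau} \le C\tau_n(p_0)$, where $\tau_n(p_0) = (p_0/n)\sqrt{\log(n/p_0)}$ or a comparable choice.

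For the upper bound, I would show that $M(\tau) - M(\tau_n)$ is negative, with probability tending to one, for every $\tau > C\tau_n$. To do this, split the sum into null coordinates ($\theta_{0i}=0$) and signal coordinates ($i$ in the support, $\abs{\text{support}}\le p_0$).

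\textbf{Nulls.} Use the scale-mixture form and the log-pole bounds of \Cref{thm:cps}, rescaled by $\tau$. For small $\tau$ these give
\[
m_\tau(y) = \phi(y)\bigl(1 + \tau\, g(y) + o(\tau)\bigr),
\]
where $g$ has negative mean under $\phi$ and is of order $e^{y^2/2}/y^2$ in the tail. Summing over $n - p_0$ nulls, the drift of the null part is about $-c\,n\tau$. Its fluctuation is controlled by a truncated Bernstein bound. The truncation is needed because $g$ is heavy-tailed; I would cut at $\abs{y} = \sqrt{2\log(1/\tau)}$.

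\textbf{Signals.} Each term $\log m_\tau(y_i)$ increases in $\tau$ by at most $O(\log(1/\tau))$. This follows from the Cauchy tail $\piH(\theta\mid\tau) \asymp \tau/\theta^2$, since $\log m_\tau(y_i) \ge \log\tau - 2\log\abs{y_i} + O(1)$ and $\log m_\tau(y_i) \le 0$. The total signal gain is therefore at most $p_0\log(n/p_0)$ up to constants.

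Comparing the two pieces, the null penalty $n\tau$ dominates the signal gain $p_0\log(n/p_0)$ once $\tau \gg (p_0/n)\log(n/p_0)$. A sharper second-order expansion of the null drift, using $E_\phi[g]$ and its logarithmic tail, should bring this down to $C\tau_n(p_0)$. Uniformity over $\ell_0[p_0]$ holds because the signal bound depends on $\theta_0$ only through $p_0$ and the crude $\log\abs{y_i}$ bound. To pass from pointwise to uniform control in $\tau$, I would use a union bound over a geometric grid in $\tau$ together with monotonicity of $m_\tau$ in $\tau$ on each grid cell.

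For (ii), use the fact that the posterior inclusion probability $\hat{\pi}_i(\tau) = 1 - E[\kappa_i\mid y_i,\tau]$ is monotone in $\tau$. On the event $\hat{\tau}\in[1/n, C\tau_n]$, the data-driven rule is therefore sandwiched between the fixed-$\tau$ rules at $\tau = 1/n$ and at $\tau = C\tau_n$. Each endpoint has threshold $\sqrt{2\log(1/\tau)}\,(1+o(1))$, and both endpoints lie in $[\sqrt{2\log(n/p_0)}(1-o(1)),\, \sqrt{2\log n}]$. I would then apply the Type~I/Type~II computation from the proof of \Cref{thm:abos} at each endpoint. Type~I error is controlled by the larger threshold, and Type~II error by signals with $A>1$ relative to the smaller threshold. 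The result is near-minimax risk.

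\textbf{Main obstacle.} The hardest step is the null-drift expansion: obtaining the precise order $n\tau\sqrt{\log(1/\tau)}$ rather than a cruder $n\tau$ bound, so that the upper localisation is sharp up to constants. For this I would first try to compute $E_\phi[\log(m_\tau/\phi)]$ directly via the $\kappa$-representation \eqref{eq:kappa-posterior}.
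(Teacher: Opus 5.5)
The paper gives no proof of this statement. It is quoted from \citet{vanderpas2017adaptive}, whose localisation argument has the shape you propose (null drift against signal contribution, then concentration and uniformity in $\tau$), but is run on the score $s_\tau(y)=\tau\,\partial_\tau\log m_\tau(y)$. Your version fails at the step you call the main obstacle, because the order of the null drift is wrong. The first-order term in $m_\tau=\phi(1+\tau g+\cdots)$ is $g(y)=\int(\cosh(y\theta)e^{-\theta^2/2}-1)\,2K\theta^{-2}\,d\theta$. Since $\E_\phi\cosh(Y\theta)=e^{\theta^2/2}$, you get $\E_\phi g=0$, not a negative mean. The negative drift of $\E_0\log(m_\tau/\phi)(Y)$ comes only from $\abs{y}\gtrsim\zeta_\tau:=\sqrt{2\log(1/\tau)}$, where $\tau g(y)\approx 1$ and linearising the logarithm fails. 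There $\phi(y)g(y)\asymp y^{-2}$, so the drift is about $-\tau\int_{\abs{y}>\zeta_\tau}\phi g\asymp-\tau/\zeta_\tau$. The null penalty is therefore of order $n\tau/\sqrt{\log(1/\tau)}$, smaller than your ``crude'' $n\tau$, and the sharpening to $n\tau\sqrt{\log(1/\tau)}$ is false. Against your signal bound $p_0\log(n/p_0)$, the true penalty only yields $\hat\tau\lesssim(p_0/n)\log^{3/2}(n/p_0)$.

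The missing idea is that each signal costs $O(1)$, uniformly in $y$. From the scale-mixture form, $\tau\partial_\tau\log\piH(\theta\mid\tau)=-1+\E[\theta^2/(\lambda^2\tau^2)\mid\theta]\in[-1,1]$, because the conditional mean lies in $[0,2]$. Hence $\abs{s_\tau(y)}\le 1$, and $\log m_\tau(y)-\log m_{\tau'}(y)\le\log(\tau/\tau')$ for $\tau\ge\tau'$. So the signal gain in your level comparison is at most $p_0\log(\tau/\tau_n)$, and signals add at most $p_0$ to the score, while nulls subtract $\asymp n\tau/\zeta_\tau$. Negativity on $[C\tau_n,1]$ is then the balance $n\tau/\zeta_\tau\gg p_0$, i.e.\ $\tau_n(p_0)=(p_0/n)\sqrt{\log(n/p_0)}$. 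The same bound fixes your uniformity over $\ell_0[p_0]$: your bound via $\log m_\tau(y_i)\ge\log\tau-2\log\abs{y_i}+O(1)$ depends on $\abs{y_i}$, which is unbounded on $\ell_0[p_0]$. Also, $m_\tau(y)$ is not monotone in $\tau$ (it decreases at $y=0$ and increases for large $\abs{y}$), so your grid step needs a different device.

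In (ii), monotonicity of $\hat\pi_i(\tau)$ in $\tau$ is correct. For general $\tau$ the prior of $\kappa$ is proportional to $\kappa^{-1/2}(1-\kappa)^{-1/2}/\{1-(1-\tau^2)\kappa\}$, whose likelihood ratios are monotone in $\kappa$. But you use the sandwich backwards. On $\{\hat\tau\in[1/n,C\tau_n]\}$ the data-driven rejection set lies between those of the rules at $\tau=1/n$ and $\tau=C\tau_n$. So Type~I error must be bounded with the \emph{smaller} threshold (at $C\tau_n$), and Type~II error with the \emph{larger} one, $\approx\sqrt{2\log n}$. The Type~II bound then covers only signals above $A\sqrt{2\log n}$, which is a constant factor above $\sqrt{2\log(n/p_0)}$ when $p_0=n^\beta$. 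Adaptivity therefore also needs a lower bound such as $\hat\tau\gtrsim p_0/n$ under a signal-strength condition, which the one-sided localisation does not supply.

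Moreover, ``probability tending to one'' cannot be plugged into a risk of order $p_0/n$. On $\{\hat\tau>C\tau_n\}$ with $\hat\tau$ near $1$, a constant fraction of nulls can be rejected. You need quantitative tail bounds on $P(\hat\tau\ge s)$ for $s\ge C\tau_n$, a peeling argument over $s$, and control of the dependence between $\hat\tau$ and $y_i$. Note that the cited work establishes $\ell_2$ posterior contraction rates, not testing risk, so this half of the statement has no source argument for you to follow.
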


\subsection{Truncated Half-Cauchy Prior}\label{sec:hc-prior}

The recommended fully Bayesian specification is the truncated half-Cauchy:
\begin{equation}\label{eq:hc-trunc}
  \tau \sim C^+(0,1) \cdot \mathbf{1}[\tau \in (0,1)].
\end{equation}
The truncation to $(0,1)$ prevents the prior from placing mass on $\tau > 1$ (inconsistent with sparsity) and avoids HMC sampler pathologies from heavy right tails. The half-Cauchy is flat at $\tau = 0$, allowing the posterior for~$\tau$ to concentrate wherever the data support---near zero in highly sparse settings, at larger values when signals are more numerous.

\begin{theorem}[\citet{vanderpas2017adaptive}]\label{thm:adaptive}
Under the truncated half-Cauchy prior~\eqref{eq:hc-trunc}, the horseshoe posterior achieves rate-adaptive optimal contraction: for any $\theta_0 \in \ell_0[p_0]$, the posterior concentrates around $\theta_0$ at the near-minimax rate $p_0\log(n/p_0)/n$.
\end{theorem}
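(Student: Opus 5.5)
The plan follows the standard route of \citet{vanderpas2017adaptive}. First I show that the posterior on $\tau$ puts vanishing mass on values much larger than the oracle scale $\tau_n(p_0) = (p_0/n)\sqrt{\log(n/p_0)}$. Then I combine this with the fixed-$\tau$ contraction theory of \citet{vanderpas2014horseshoe}, which is uniform over $\tau$ in a range.

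\textbf{Step 1: fixed-$\tau$ contraction, uniformly in $\tau$.} For fixed $\tau$, I decompose the posterior risk $\E_{\theta_0}\Pi(\|\theta-\theta_0\|^2 \mid Y,\tau)$ coordinatewise into signal and null parts.
\begin{itemize}
\item \emph{Null coordinates.} The $\mathrm{Beta}(1/2,1/2)$ representation on $\kappa$ from \Cref{sec:kappa} shows that $1-\E[\kappa_i\mid y_i,\tau]$ is of order $\tau e^{y_i^2/2}$ (up to constants) for $\abs{y_i}\lesssim\sqrt{2\log(1/\tau)}$, and bounded by $1$ otherwise. Integrating against $N(0,1)$ gives a per-null risk of order $\tau\sqrt{\log(1/\tau)}$. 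The posterior variance term is bounded the same way.
\item \emph{Signal coordinates.} The Cauchy tail in \Cref{thm:cps} gives $\abs{\hat\theta_i - y_i}\lesssim \sqrt{\log(1/\tau)}$ uniformly. The risk per signal is therefore $O(1+\log(1/\tau))$.
\end{itemize}
Summing gives a bound $p_0\log(1/\tau) + (n-p_0)\tau\sqrt{\log(1/\tau)}$. Both terms are $\lesssim p_0\log(n/p_0)$ uniformly for $\tau\in[1/n,\, C\tau_n(p_0)]$. Markov's inequality then converts this into contraction at rate $p_0\log(n/p_0)$, uniformly over that $\tau$-interval.

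\textbf{Step 2: the posterior of $\tau$ avoids large values.} This is the main obstacle. The posterior density of $\tau$ is proportional to $m(Y\mid\tau)$ times the truncated half-Cauchy prior \eqref{eq:hc-trunc}. That prior is bounded above and below on $(0,1)$, so it only affects constants. I first show $\Pi(\tau > C\tau_n(p_0)\mid Y)\to0$ by bounding the log marginal likelihood ratio $\log m(Y\mid\tau)-\log m(Y\mid\tau_n)$.
\begin{itemize}
\item \emph{Null coordinates.} Each null term has expectation roughly $-c(\tau - \tau_n)\sqrt{\log(1/\tau)}$. Summed over the $n-p_0$ nulls, this gives a deterministic drift of order $-n\tau$. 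A Bernstein-type inequality controls its fluctuations of order $\sqrt{n\tau}$.
\item \emph{Signal coordinates.} These gain at most $p_0\log(\tau/\tau_n)$.
\end{itemize}
For $\tau \gg \tau_n$ the drift $-n\tau$ dominates. The ratio is then exponentially small, and integrating over $\tau\in(C\tau_n,1)$ kills that region. I also need a lower bound on the mass near $\tau_n$. The prior density is bounded below on $[\tau_n, 2\tau_n]$ and $m(Y\mid\tau)$ is stable there, so this mass is $\gtrsim \tau_n e^{-O(p_0\log(n/p_0))}$. This lower bound is the delicate part: the denominator must not be too small relative to the $e^{-cn\tau}$ numerator.

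Small values of $\tau$ need no separate treatment, because Step 1 already covers all $\tau\in(0, C\tau_n]$. Below $1/n$ the null risk only decreases, and the signal risk $\log(1/\tau)$ is cut off by the fact that the prior places $O(1/n)$ mass there. Alternatively, the same likelihood-ratio argument shows that posterior mass below $1/n$ is negligible when $p_0\ge1$.

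\textbf{Step 3: combine.} I write
\[
\Pi(\|\theta-\theta_0\|^2 > M p_0\log(n/p_0)\mid Y) \le \sup_{\tau\le C\tau_n}\Pi(\cdot\mid Y,\tau) + \Pi(\tau> C\tau_n\mid Y).
\]
The first term tends to zero by Step 1 and the second by Step 2, uniformly over $\ell_0[p_0]$. Dividing by $n$ gives the stated per-coordinate rate $p_0\log(n/p_0)/n$.
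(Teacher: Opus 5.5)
The paper gives no argument for this statement beyond the citation to \citet{vanderpas2017adaptive}. Your architecture (control the upper tail of the $\tau$-posterior by marginal-likelihood ratios, then apply fixed-$\tau$ contraction uniformly over a $\tau$-range) is the one used there. There is, however, a genuine gap at the lower end of the range.

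\textbf{The Step~1 rate claim.} Your bound $p_0\log(1/\tau)+(n-p_0)\tau\sqrt{\log(1/\tau)}$ equals $p_0\log n$ at $\tau=1/n$. This is \emph{not} $\lesssim p_0\log(n/p_0)$ unless $p_0\le n^{\alpha}$ for some $\alpha<1$; for $p_0=n/\log n$ it is $\log n$ against $\log\log n$. The worst case is attained: at $\tau=1/n$, signals of size $0.9\sqrt{2\log n}$ are shrunk to zero.

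Below $1/n$ the bound is unbounded, and neither of your remedies works as stated. Prior mass $O(1/n)$ does not bound posterior mass when your denominator is only $\tau_n e^{-O(p_0\log(n/p_0))}$. Nor are the signals what keeps $\tau$ up, so ``$p_0\ge1$'' is not the mechanism; whatever control exists there comes from flatness of the null likelihood up to scale $\sqrt{\log n}/n$, and it needs its own proof.

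You also cannot rescue the rate by confining the $\tau$-posterior near $\tau_n$ from below, because it need not stay there. Take $p_0=n/\log n$ nonzero means all equal to $1$. The signals exert only a weak pull, and the $\tau$-posterior concentrates where $\log(1/\tau)\asymp(\log\log n)^2\gg\log(n/p_0)$, even though the actual loss there is $O(p_0)$.

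The missing idea is a signal-dependent bound. At fixed $\tau$ the risk of signal $i$ is $\lesssim 1+\min\{\theta_{0i}^2,\log(1/\tau)\}$. Meanwhile, every coordinate with $\abs{y_i}\gtrsim\sqrt{2\log(1/\tau)}$ costs the marginal likelihood a factor of order $\tau/\tau_n$ relative to $\tau_n$, and the nulls can repay at most $e^{O(p_0)}$. Coupling the two shows the posterior can sit at small $\tau$ only when the signals it kills are cheap. Without this, even with the other steps repaired, the argument delivers only $p_0\log n$. That is the rate the cited result states, with $\tau$ confined to $[1/n,1]$.

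\textbf{The drift in Step~2.} Write $\pi_\tau(\theta)=\tau^{-1}\piH(\theta/\tau)$. The expected change of the log marginal likelihood per null coordinate is $-\KL\bigl(N(0,1)\,\|\,\phi*\pi_\tau\bigr)\asymp-\tau/\sqrt{\log(1/\tau)}$. It is not $-\tau\sqrt{\log(1/\tau)}$; that quantity is the null \emph{estimation} risk. This $1/\sqrt{\log}$ factor is exactly what places the balance against the signals' gain $p_0\log(\tau/\tau_n)$ at $\tau_n=(p_0/n)\sqrt{\log(n/p_0)}$. Your drift would instead balance at $p_0/(n\sqrt{\log(n/p_0)})$.

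Consequently, for $\tau\ge C\tau_n$ the numerator is suppressed only by $e^{-cCp_0}$, and a denominator bound $\tau_n e^{-O(p_0\log(n/p_0))}$ cannot close the argument. You need $\tau_n e^{-c'p_0}$ with $c'<cC$. That bound is available:
\begin{itemize}
\item $\piH$ is decreasing in $\abs{\theta}$, so $\pi_\tau\ge\tfrac12\pi_{\tau_n}$ pointwise for $\tau\in[\tau_n,2\tau_n]$, and each signal coordinate loses at most $\log 2$;
\item the null sum loses $O(p_0)$ by the drift bound plus a fluctuation bound.
\end{itemize}

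\textbf{Uniformity in Step~3.} Step~3 needs $\E_{\theta_0}\sup_{\tau}\Pi(\cdot\mid Y,\tau)\to0$. Markov's inequality at each fixed $\tau$ only controls $\sup_\tau\E_{\theta_0}\Pi(\cdot\mid Y,\tau)$. Moving the supremum inside requires monotonicity in $\tau$ of the coordinatewise posterior-moment bounds.
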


A flat prior $\tau \sim \mathrm{Uniform}(0,1)$ is sometimes used as a default. While it places sufficient mass near the true $\tau_0 \asymp p_0/n$ to guarantee adaptive contraction for estimation, it has a critical failure mode for testing: the uniform prior provides no regularisation of~$\tau$ toward the sparse region, so the posterior for~$\tau$ can develop a heavy right tail when a handful of large noise observations mimic signals, leading to systematic under-shrinkage of null coordinates and inflated Type~I error.

\subsection{Three Regimes of Inefficiency}\label{sec:regimes}

When $\hat{\tau}$ deviates from the oracle $\tau_0 = p_0/n$, the Bayes risk degrades through three distinct mechanisms.

In the \textbf{over-shrinkage regime} ($\hat{\tau} \ll \tau_0$), the effective threshold becomes $\tilde{t}_n = \sqrt{2\log(1/\hat{\tau})} \gg \tcrit$, and true signals with $\abs{y_i} \in (\tcrit, \tilde{t}_n)$ are missed. The Bayes risk is dominated by Type~II error: $R_n(\hat{\tau} \ll \tau_0) \approx (p_0/n)\Phi(\tilde{t}_n - \mu_n) \gg R_n^*$. The MMLE with floor at $1/n$ prevents this by bounding the effective threshold above at $\sqrt{2\log n}$.

In the \textbf{under-shrinkage regime} ($\hat{\tau} \gg \tau_0$), the effective threshold drops to $\tilde{t}_n \ll \tcrit$, and many null observations with $\abs{y_i} \in (\tilde{t}_n, \tcrit)$ are falsely declared signals. Type~I error inflates: $R_n(\hat{\tau} \gg \tau_0) \approx (1 - p_0/n)\bar{\Phi}(\tilde{t}_n) \gg R_n^*$. The uniform prior is most vulnerable here; the truncated half-Cauchy mitigates this through its light right tail.

At the \textbf{detection boundary} ($\mu_n = \pm\tcrit$), the Bayes risk cannot be reduced below $p_0/(2n)$ regardless of how well $\tau$ is estimated:
\begin{equation}\label{eq:boundary-risk}
  R_n \geq (p_0/n)\Phi(0) = p_0/(2n) \qquad\text{for } \mu_n = \tcrit.
\end{equation}
This boundary inefficiency is irreducible: the self-similarity condition of \citet{vanderpas2017uncertainty} precisely excludes this worst case.

\begin{table}[H]
\centering
\small
\renewcommand{\arraystretch}{1.3}
\begin{tabular}{@{}lcccc@{}}
\toprule
\textbf{$\tau$ method} & \textbf{Type I} & \textbf{Type II} & \textbf{Boundary} & \textbf{ABOS?} \\
\midrule
Oracle $\tau = p_0/n$       & Optimal & Optimal & Best & Yes (exact) \\
MMLE on $[1/n, 1]$         & Controlled & Controlled & Near-optimal & Yes \\
Half-Cauchy (truncated)     & Controlled & Controlled & Good & Yes \\
Half-Cauchy (untruncated)   & Can inflate & Controlled & Moderate & With caveats \\
Uniform on $(0,1)$          & Can inflate & Can inflate & Weakest & Not guaranteed \\
\bottomrule
\end{tabular}
\caption{Comparative performance of $\tau$ calibration methods for the testing problem.}
\label{tab:tau-comparison}
\end{table}

\section{Connection to Statistical Sparsity}\label{sec:sparsity}

The results above can be embedded in the broader statistical sparsity framework of \citet{mccullagh2018statistical} and extended to the sparse factor model.

\subsection{The Exceedance Measure Framework}\label{sec:exceedance}

\citet{mccullagh2018statistical} define statistical sparsity through the \emph{exceedance measure}: in the sparse limit $\rho \to 0$, the signal-plus-noise convolution depends on the signal distribution only through its exceedance measure $H$ and rate parameter $\rho > 0$. For the horseshoe with global parameter~$\tau$, the signal distribution in the sparse limit $\tau \to 0$ belongs to the class of inverse-power measures:
\begin{equation}\label{eq:exceedance}
  H_{\mathrm{HS}}(x, \infty) \sim C_\alpha / x^\alpha \qquad\text{with } \alpha \approx 1 \text{ (Cauchy-like tails)},
\end{equation}
with rate parameter $\rho \asymp \tau \asymp p_0/n$. Two implications follow. First, any two sparse families with the same exceedance measure are inferentially equivalent to first order in~$\rho$: the horseshoe is equivalent to a Cauchy-tailed spike-and-slab at the leading term of the Bayes risk expansion. Second, the ABOS threshold $\tcrit = \sqrt{2\log(n/p_0)} = \sqrt{2\log(1/\rho)}$ arises naturally as the scale at which the exceedance integral transitions from Type~I to Type~II dominated behaviour.

The threshold $\tcrit = \sqrt{2\log(1/\rho)}$ is universal across all sparse priors with $\alpha$-stable exceedance measures for $\alpha \in (0,2)$. The horseshoe ($\alpha \approx 1$) and horseshoe+ (slightly heavier local mass) both belong to this class. The difference between them appears only in the constant of the Bayes risk expansion, not in the leading scale $\log(n/p_0)$. This universality result complements the MDP universality of \Cref{sec:mdp}: the $\sqrt{\log n}$ rate is universal across both the class of Cram\'er-regular priors (MDP universality) and the class of inverse-power exceedance measures (McCullagh--Polson universality).

\subsection{Sparse Factor Model Extension}\label{sec:factor}

The sparse normal means analysis extends to the sparse factor model $Y = \Lambda F + \varepsilon$, where $\Lambda \in \R^{p \times k}$ is a sparse factor loading matrix, $F \sim N(0, I_k)$, and $\varepsilon \sim N(0, \Sigma)$. Testing whether loading $\lambda_{ij} = 0$ reduces to a simultaneous testing problem over the $k \times p$ entries of~$\Lambda$, with ABOS holding conditionally on the identifiability of the sparsity pattern. Following \citet{drton2025algebraic}, identifiability requires a matching criterion on the bipartite graph of nonzero loadings, and the Bayes risk for the factor model decomposes as:
\begin{equation}\label{eq:factor-risk}
  R_n^{\mathrm{factor}} = \sum_{j=1}^{k} \bigl[\pi_{0,j}\,\bar{\Phi}(t_n) + \pi_{1,j}\,\beta_{n,j}\bigr],
\end{equation}
where each factor-specific risk component obeys the same moderate deviation scaling as in the univariate case. The horseshoe prior applied to the vectorised loadings $\mathrm{vec}(\Lambda)$ achieves ABOS for the factor testing problem provided the sparsity pattern is identifiable.

\section{Simulation Evidence}\label{sec:simulations}

We present simulation results confirming the theoretical predictions across a range of sparsity levels and $\tau$ calibration methods.

Experiments are conducted in the ultra-sparse regime ($p_0 = 10$, $n = 2000$) with signal strength $A = 1.5$ (where $\mu_n = A\sqrt{2\log(n/p_0)}$). Each cell is based on $1000$ Monte Carlo replications.

\begin{table}[H]
\centering
\small
\renewcommand{\arraystretch}{1.3}
\begin{tabular}{@{}lccccc@{}}
\toprule
\textbf{$\tau$ method} & $R_n \times n/p_0$ & \textbf{Type I} & \textbf{Type II} & $\hat{\tau}$ \textbf{bias} & \textbf{Rel.\ eff.} \\
\midrule
Oracle (HS) & 1.000\,(.008) & .050\,(.003) & .074\,(.004) & 0 & 1.00 \\
MMLE (HS)   & 1.041\,(.012) & .053\,(.003) & .078\,(.005) & $+.002$ & 0.96 \\
Trunc.\ HC (HS) & 1.063\,(.015) & .057\,(.004) & .077\,(.005) & $+.003$ & 0.94 \\
MMLE (HS+)  & 1.019\,(.010) & .051\,(.003) & .072\,(.004) & $+.002$ & 0.98 \\
HC untrunc. & 1.148\,(.024) & .071\,(.006) & .080\,(.006) & $+.008$ & 0.87 \\
Uniform     & 1.312\,(.038) & .096\,(.009) & .082\,(.006) & $+.015$ & 0.76 \\
\bottomrule
\end{tabular}
\caption{Integrated Bayes risk ($\times n/p_0$) at $n = 2000$, $p_0 = 10$, $A = 1.5$. Standard errors in parentheses.}
\label{tab:sim-risk}
\end{table}

The constrained MMLE with horseshoe+ achieves the highest relative efficiency ($0.98$), confirming the theoretical prediction that the extra local mass of the horseshoe+ translates to a smaller ABOS constant. The uniform prior shows persistent inefficiency (relative efficiency $0.76$), driven by Type~I error inflation consistent with the under-shrinkage regime of \Cref{sec:regimes}.

\begin{table}[H]
\centering
\small
\renewcommand{\arraystretch}{1.3}
\begin{tabular}{@{}lcccc@{}}
\toprule
\textbf{Method} & $n = 500$ & $n = 1000$ & $n = 2000$ & $n = 5000$ \\
\midrule
MMLE (HS)        & 1.118 & 1.079 & 1.048 & 1.021 \\
Trunc.\ HC (HS)  & 1.143 & 1.097 & 1.063 & 1.029 \\
MMLE (HS+)       & 1.094 & 1.059 & 1.027 & 1.012 \\
Trunc.\ HC (HS+) & 1.118 & 1.077 & 1.046 & 1.019 \\
Uniform          & 1.428 & 1.354 & 1.312 & 1.267 \\
\bottomrule
\end{tabular}
\caption{Ratio $R_n/R_n^*$ versus sample size ($p_0 = 5$, $A = 1.5$). Convergence to $1$ confirms ABOS.}
\label{tab:sim-scaling}
\end{table}

The convergence of $R_n/R_n^*$ to~$1$ for all methods except the uniform prior confirms the ABOS predictions. The horseshoe+ with constrained MMLE is fastest to converge, achieving $R_n/R_n^* \approx 1.01$ at $n = 5000$. The uniform prior shows persistent inefficiency, remaining above $1.25$ even at $n = 5000$.

\section{Precise Hierarchy of Bounds}\label{sec:hierarchy}

The following hierarchy summarises the relationship between the Polson--Scott bounds and the MDP results, from most local (per-coordinate, finite-$n$) to most global (asymptotic, sparse regime). Each level implies the next through the same logarithmic constant $K = (2\pi^3)^{-1/2}$ and sparsity scale $\tau = p_0/p$.

\paragraph{Level 1---Density bound (\Cref{thm:cps}, per coordinate, all~$n$).}
\[
  \frac{K}{2}\log\!\left(1+\frac{4}{\theta^2}\right)
  < \piH(\theta)
  < K\log\!\left(1+\frac{2}{\theta^2}\right).
\]
This is the root of the entire hierarchy. The log-pole near zero and $1/\theta^2$ tail encode the horseshoe's dual character: infinite prior mass at zero, heavy tails away from zero.

\paragraph{Level 2---Shrinkage weight bound (posterior, per coordinate).}
\[
  \E[\kappa_i \mid y_i, \tau]
  \approx 1 - C\frac{\tau^2}{y_i^2 + \tau^2}, \qquad
  \hat\theta_i \approx C\frac{\tau^2 y_i}{y_i^2}.
\]
The density bound at Level~1 determines the posterior shrinkage: the log-pole forces $\kappa_i \to 1$ for small $\abs{y_i}$, while the heavy tail allows $\kappa_i \to 0$ for large $\abs{y_i}$.

\paragraph{Level 3---KL risk bound (super-efficiency, per null coordinate).}
\[
  \E_{y_i}\!\left[\KL(f_0 \| \hat{f})\right]
  = O(\tau^4\log^2(1/\tau))
  = o(1/n).
\]
The shrinkage weight at Level~2 implies that the posterior mean $\hat{\theta}_i = O(\tau^2/y_i)$ for null coordinates, and squaring gives KL risk $O(\tau^4)$---super-efficient.

\paragraph{Level 4---Hellinger bound (posterior concentration).}
\[
  H^2(f_0, \hat{f})
  \leq \frac{\hat\theta_i^2}{8\sigma^2}
  = O(\tau^4/y_i^2).
\]
By Pinsker's inequality, the KL bound at Level~3 implies Hellinger concentration of the predictive around the truth for null coordinates.

\paragraph{Level 5---Prior mass bound (detection zone).}
\[
  \piH([-t_n, t_n])
  \approx 2K\, t_n \log(1/t_n)
  \quad\text{for } t_n = \tcrit.
\]
The density bound at Level~1, integrated over $[-\tcrit, \tcrit]$, gives the prior mass in the detection zone. This mass equals the Type~I error at the MDP threshold.

\paragraph{Level 6---Type I error bound (at MDP threshold).}
\[
  P_0(\abs{Y} > \tcrit)
  = P_0\!\left(\abs{Y} > \sqrt{\log(\pi n/2)}\right)
  \approx \frac{1}{n\sqrt{\log n/2}}.
\]
Setting the prior mass (Level~5) equal to the Type~I error and solving determines the exact MDP constant.

\paragraph{Level 7---MDP Bayes risk (ABOS, global).}
\[
  r(\piH, \varphi^*)
  \asymp \frac{p_0 \log(p/p_0)}{n}
  \quad\text{\citep{datta2026newlook}}.
\]
Summing the per-coordinate contributions---$O(\tau^4)$ for each of $p - p_0$ nulls (negligible) and $O(\log n / n)$ for each of $p_0$ signals---gives the ABOS rate.

\paragraph{Level 8---Clarke--Barron budget (information-theoretic, asymptotic).}
\[
  \sum_{i=1}^n \KL(P_0 \| \hat{P}_i)
  = \frac{p_0}{2}\log n + O(1)
  \quad\text{(null coordinates contribute $0$)}.
\]
The Clarke--Barron theorem provides the information-theoretic interpretation: the total logarithmic budget $\frac{p_0}{2}\log n$ is allocated entirely to signal coordinates, with null coordinates contributing zero due to super-efficiency.

The log-pole at Level~1 is the root cause of super-efficiency at Level~3, which implies the detection zone at Level~5, which determines the exact MDP constant at Level~6, which produces the ABOS rate at Level~7, and finally the Clarke--Barron budget at Level~8. \Cref{tab:correspondences} collects these correspondences alongside the original source and MDP interpretation of each result.

\begin{table}[H]
\centering
\small
\renewcommand{\arraystretch}{1.3}
\begin{tabular}{@{}p{2.3cm}p{2.2cm}p{4.2cm}p{4.5cm}@{}}
\toprule
\textbf{Result} & \textbf{Location} & \textbf{Expression} & \textbf{MDP interpretation} \\
\midrule
Log-pole density bound &
CPS 2010, Thm~1.1 &
$\piH(\theta) \asymp K\log(1/\theta^2)$ &
Cram\'er boundary: finite variance but infinite density at zero \\

Cauchy tail bound &
CPS 2010, Thm~1.1 &
$\piH(\theta) \asymp 2K/\theta^2$ &
Tail robustness: signals above threshold unshrunk \\

Necessary condition &
PS 2010, Thm~1 &
$\pi(0) = +\infty$ &
Required for ABOS: prior must dominate likelihood at zero \\

Sufficient condition &
PS 2010, Thm~2 &
$\pi(\theta) \asymp -\log\abs{\theta}$, $\pi(\theta) \asymp \abs{\theta}^{-\alpha}$ &
Log-pole + heavy tail: the admissible pair \\

Super-efficiency &
CPS 2010, Thm~2 &
$\KL(f_0\|\hat{f}) = O(\tau^4)$ &
KL risk below MDP threshold is sub-parametric \\

Shrinkage weight &
CPS 2010 &
$\kappa_i \sim \mathrm{Beta}(1/2,1/2)$ &
MDP equiboundary at $\kappa_i = 1/2 \leftrightarrow \abs{y_i} = \tcrit$ \\

L\'evy measure &
PS 2010 &
$\nu(ds) \asymp s^{-1}\,ds$ &
Cauchy/stable-$1/2$ boundary: minimal admissible log-pole \\

MDP threshold &
DPSZ 2026 &
$\tcrit = \sqrt{\log(\pi n/2)}$ &
$\pi$ from normalisation constant $K$ in log-pole bound \\

MDP universality &
DPSZ 2026 &
$\sqrt{\log n}$ scaling &
Log-pole is the universal sufficient condition \\

ABOS Bayes risk &
DG 2013 &
$r \asymp p_0\log(p/p_0)/n$ &
MDP rate: sum of $p_0$ signal-coordinate log budgets \\

Clarke--Barron &
CB 1990 &
Cumulative KL $= (p_0/2)\log n$ &
$p_0$ active dimensions each contribute $\log n / 2$ \\
\bottomrule
\end{tabular}
\caption{Correspondences between the Polson--Scott bounds and the MDP framework. CPS = Carvalho--Polson--Scott; PS = Polson--Scott; DPSZ = Datta--Polson--Sokolov--Zantedeschi; DG = Datta--Ghosh; CB = Clarke--Barron.}
\label{tab:correspondences}
\end{table}

\section{Discussion}\label{sec:discussion}

Every element of the theory---the density bound, the super-efficiency rate, the MDP threshold, the ABOS risk, the Clarke--Barron budget---involves $\log n$ or $\log(1/\theta)$. This is not coincidental: the logarithm is the universal scale at which Bayesian and frequentist risk calibrations intersect in the infinite-dimensional sparse regime. The \citet{rubin1965bayes} theory of Bayes risk efficiency established that the moderate deviation scale---neither CLT (fixed threshold) nor large deviation (exponential rate)---is the natural home of Bayes risk efficiency. The horseshoe's log-pole is the prior design that makes this scale manifest: it has exactly the right amount of mass at zero to participate in the logarithmic budget without over- or under-spending it.

The comparison with other priors is instructive. The Lasso prior $\pi(\theta) \propto e^{-\abs{\theta}/\lambda}$ has bounded density at zero and therefore fails the necessary condition (\Cref{thm:ps-nec}); it does not achieve super-efficiency, and its KL risk for nulls is $O(1/n)$---the standard parametric rate. Because its prior density at zero is finite, the Lasso cannot allocate zero KL budget to null coordinates.

\begin{proposition}[Laplace prior KL risk]\label{prop:lasso}
Any prior with $\pi(0) < \infty$ achieves KL risk $\Omega(1/n)$ for null coordinates---the parametric rate---and is not super-efficient.
\end{proposition}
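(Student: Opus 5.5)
The plan is to prove a lower bound on the posterior mean at a null coordinate and then feed it into the location-family KL identity~\eqref{eq:kl-location}. The statement has to be read in the $n$-observation setting, since that is where the rate $1/n$ comes from. The sufficient statistic is $\bar y \sim N(\theta, \sigma^2/n)$, the truth is $\theta_0 = 0$, and the prior $\pi$ is fixed, continuous at $0$, with $0 < \pi(0) < \infty$. The predictive density is the Gaussian $N(\cdot,\sigma^2)$ convolved with the posterior. By the convexity argument behind~\eqref{eq:kl-location}, its KL divergence from $f_0$ is bounded below by a quantity of order $\hat\theta^2/(2\sigma^2)$ plus a nonnegative variance-inflation term. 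So it suffices to show $\E_0[\hat\theta(\bar y)^2] = \Omega(1/n)$. If anything, the variance-inflation term makes this easier: the posterior variance is itself of order $1/n$ when $\pi(0)$ is finite and positive, and that already contributes $\Omega(1/n)$ to the KL divergence.

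\textbf{Step 1: local flatness of the posterior.} The first step is a Laplace-type expansion on the window $\abs{\theta} \le M/\sqrt{n}$. There, $\pi(\theta) = \pi(0)(1+o(1))$ by continuity. The contribution from outside the window is exponentially small by a Gaussian tail bound, provided $\pi$ has at most polynomial growth or is merely integrable. Together these show that, uniformly over $\abs{\bar y} \le M'/\sqrt{n}$, the posterior is asymptotically $N(\bar y, \sigma^2/n)$. In particular $\hat\theta(\bar y) = \bar y(1+o(1))$. This is exactly where finiteness of $\pi(0)$ enters. Because the prior is asymptotically flat on the $n^{-1/2}$ scale, it cannot pull the posterior mean to $0$. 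A pole at $0$ behaves differently: under the log-pole of \Cref{thm:cps}, the local prior mass swamps the likelihood, which is why \Cref{thm:ps-nec} separates the two cases.

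\textbf{Step 2: integrate over the sampling distribution.} Under $\theta_0 = 0$, the event $\abs{\bar y} \in [\sigma/\sqrt{n}, 2\sigma/\sqrt{n}]$ has probability bounded below by a positive constant. On that event, Step 1 gives $\hat\theta^2 \ge c\,\sigma^2/n$. Hence $\E_0[\KL(f_0\|\hat f)] \ge c'/n$, which is the $\Omega(1/n)$ claim. The same lower bound also comes out of the Clarke--Barron expansion~\eqref{eq:self-info}. When $-\log\pi(0)$ is finite, the cumulative risk is $\tfrac12\log n + O(1)$, and the per-step risks are then of order $1/(2n)$. I would mention this as a consistency check rather than as the proof.

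\textbf{Main obstacle.} The hard part is making Step 1 uniform under only weak assumptions on $\pi$. If $\pi(0) = 0$, or $\pi$ is discontinuous at $0$, the posterior can still be biased toward $0$. For that reason I would state the hypothesis as $0 < \pi(0) < \infty$ with $\pi$ continuous at $0$, which covers the Laplace example. I would also have to handle priors that depend on $n$, such as a Laplace prior with scale $\lambda_n \to 0$. There the effective $\pi_n(0) = 1/(2\lambda_n)$ grows, and the $\Omega(1/n)$ bound can fail: it changes by the factor $(1+n^{-1/2}\pi_n(0)\sigma)^{-2}$. My first attempt would be to restrict to $\sqrt{n}\,\lambda_n$ bounded away from $0$. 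Under that restriction Step 1 still goes through with a bounded shrinkage factor, and this matches the paper's earlier remark that tuning $\lambda$ only reaches $\Theta(1/n)$ at best.
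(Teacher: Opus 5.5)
Your route differs from the paper's, and once two steps are repaired it is the better-founded one.

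\textbf{How the two routes differ.} The paper stays with a single observation $y_i\sim N(0,\sigma^2)$. It asserts $\E[\kappa_i\mid y_i,\tau]\le 1-c$ on $\abs{y_i}\le\sigma$, reads \eqref{eq:kl-location} as an identity to get $\Omega(1)$ risk, and then asserts without argument that letting $\tau\to0$ with $n$ brings this to $\Omega(1/n)$ ``but not below.'' You instead put $n$ into the likelihood, keep the prior fixed, and get $\Omega(1/n)$ from local flatness of the posterior on the $n^{-1/2}$ scale.

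Your version is the one that can actually be proved. In the paper's last step, $c$ degenerates as $\tau\to0$. A Laplace prior of scale $\tau\ll\sigma$ gives $\hat\theta(y)\approx 2\tau^2y/\sigma^2$ and null risk of order $\tau^4$, which is $o(1/n)$ once $\tau\ll n^{-1/4}$. Your caveat about $n$-dependent scales, with $\sqrt n\,\lambda_n$ bounded below, is exactly the missing hypothesis. When $\sqrt n\,\lambda_n\to0$ the risk is of order $n\lambda_n^4/\sigma^4$, which is quartic in $\sqrt n\,\lambda_n$, not quadratic as your factor suggests. Your continuity assumption is also genuinely needed: $\pi(0)<\infty$ at a single point says nothing about a density, since the horseshoe density redefined at $0$ satisfies it.

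\textbf{Steps that need repair.} First, the KL lower bound cannot come from convexity. Convexity of KL in its second argument gives the reverse inequality,
\[
  \KL\bigl(f_0\,\|\,\textstyle\int N(\cdot;\theta,\sigma^2)\,d\Pi(\theta\mid\bar y)\bigr)\le \E_\Pi[\theta^2\mid\bar y]/(2\sigma^2).
\]
No lower bound in terms of the predictive mean holds in general: $0.99\,N(0,\sigma^2)+0.01\,N(100\sigma,\sigma^2)$ has mean $\sigma$ yet is within KL $\log(1/0.99)$ of $f_0$. The lower bound must use Step~1's concentration. For instance, Donsker--Varadhan with the test function $z\mapsto-\hat\theta z/\sigma^2$ gives
\[
  \KL(f_0\,\|\,\hat f)\;\ge\;\frac{\hat\theta^2}{2\sigma^2}-\log\E_\Pi\bigl[e^{-\hat\theta(\theta-\hat\theta)/\sigma^2}\,\big|\,\bar y\bigr].
\]
The last term is $O(n^{-2})$ on your event, because the posterior has Gaussian tails at scale $n^{-1/2}$. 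The paper's proof has the same unproved step, and in its one-observation model the posterior is not concentrated.

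Second, the aside that variance inflation ``already contributes $\Omega(1/n)$'' is false. Enlarging the predictive variance from $\sigma^2$ to $\sigma^2(1+1/n)$ costs only $\tfrac12\bigl[\log(1+1/n)-1/(n+1)\bigr]\approx 1/(4n^2)$. All of the $1/(2n)$ comes from $\E_0[\bar y^2]/(2\sigma^2)$.

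A minor point: Step~1 should read $\hat\theta=\bar y+o(n^{-1/2})$ uniformly, not $\bar y(1+o(1))$. The relative form fails near $\bar y=0$ when $\pi$ is asymmetric. This is harmless, since Step~2 only uses $\abs{\bar y}\ge\sigma/\sqrt n$.

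\textbf{What your framing does and does not show.} With the prior fixed (fixed $\tau$), the horseshoe's log pole is slowly varying on the $n^{-1/2}$ scale. It moves the posterior mean only by a relative $O(1/\log n)$, so the horseshoe's per-step null risk is also of exact order $1/n$. Finiteness of $\pi(0)$ is therefore not what Step~1 hinges on, contrary to your remark that the log pole swamps the likelihood. In this framing $\Omega(1/n)$ does not separate finite-density priors from the horseshoe.

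The actual separation, Carvalho--Polson--Scott's original super-efficiency, is a $\log\log n$-order term in the cumulative risk. For continuous positive $\pi(0)$, that term is ruled out by the Clarke--Barron expansion $\tfrac12\log n-\log\pi(0)+O(1)$. That expansion should therefore be part of the proof of ``not super-efficient,'' not a consistency check. Note also that a cumulative expansion does not by itself give per-step rates.
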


\begin{proof}
When $\pi(0) < \infty$, the posterior shrinkage factor satisfies $\E[\kappa_i \mid y_i, \tau] \leq 1 - c$ for some $c > 0$ uniformly over $\abs{y_i} \leq \sigma$, because the finite prior density at zero cannot overwhelm the likelihood. The posterior mean is therefore $\hat{\theta}_i(y_i) \geq c \cdot y_i$ for $\abs{y_i} \leq \sigma$, giving $\KL(f_0 \| \hat{f}) \geq c^2 y_i^2/(2\sigma^2)$. Integrating over $y_i \sim N(0, \sigma^2)$:
\[
  \E[\KL(f_0 \| \hat{f})] \geq \frac{c^2}{2\sigma^2}\E[y_i^2 \cdot \mathbf{1}_{\abs{y_i} \leq \sigma}] = \Omega(1).
\]
Choosing $\tau \to 0$ with~$n$ can reduce this to $\Omega(1/n)$ but not below, since the prior density at zero remains finite for any $\tau > 0$.
\end{proof}

The ridge prior $\pi(\theta) = N(0, \sigma_0^2)$ also fails \Cref{thm:ps-nec} for the same reason and performs even worse in sparse settings because it shrinks signals towards zero. The Cauchy prior on $\theta$ itself, $\pi(\theta) \propto (1+\theta^2)^{-1}$, satisfies \Cref{thm:ps-nec} but violates Cram\'er-regularity due to infinite variance, so the MDP expansion does not hold and the exact constant $\tcrit = \sqrt{\log(\pi n/2)}$ is not achieved. The Student-$t$ prior with $\nu > 2$ degrees of freedom has bounded density at zero (failing \Cref{thm:ps-nec}) but heavy Cauchy-like tails---robust but not super-efficient, and unable to achieve ABOS. The horseshoe, with $\piH(\theta) \asymp -\log\abs{\theta}$, is the unique prior that satisfies both \Cref{thm:ps-nec} and Cram\'er-regularity, achieving both super-efficiency and MDP optimality.

These results suggest a design principle: a sparse prior should have log-pole density at zero and Cauchy-class tails. The log-pole ensures super-efficiency for null coordinates below the MDP threshold, Cram\'er-regularity with finite variance so the exact MDP constant is achieved, ABOS testing optimality \citep{datta2013asymptotic}, and minimax posterior contraction \citep{vanderpas2014horseshoe,vanderpas2016conditions}. The Cauchy-class tails ensure tail robustness so that signals above the MDP threshold are unshrunk, a bounded influence function so no single large observation can dominate, and regular variation as required for MDP universality across prior classes. Priors with these two properties form the \textbf{admissible class} for MDP-optimal sparse inference. The horseshoe is the canonical member; the horseshoe+ \citep{bhadra2017horseshoe}, the generalized double Pareto with appropriate parameters, and Dirichlet--Laplace priors with log-pole inducing hyperparameters \citep{bhattacharya2015dirichlet} are other members.

The log-pole principle extends naturally to structured sparsity settings. In group sparsity, where signals appear in blocks, the prior on each group's norm should have a log-pole at zero and heavy tails. In graphical model estimation, the prior on each edge parameter should satisfy the same conditions for MDP-optimal edge selection. In matrix completion and low-rank estimation, the prior on each singular value plays the analogous role, with the log-pole ensuring super-efficient shrinkage of zero singular values and heavy tails preserving large singular values. The general principle is that MDP-optimal inference requires a log-pole along whatever ``zero manifold'' defines the sparse structure.

The theoretical optimality of the horseshoe comes with a computational cost. The log-pole creates a \emph{funnel geometry} in the joint $(\theta_i, \lambda_i)$ parameter space: when $\theta_i$ is near zero, $\lambda_i$ must also be near zero (since $\theta_i \mid \lambda_i \sim N(0, \lambda_i^2\tau^2)$), creating a narrow funnel that standard Gibbs samplers traverse slowly \citep{makalic2015simple}. The same feature that makes the horseshoe statistically optimal---the infinite spike at zero---makes MCMC mixing difficult near the null. Slice samplers and Hamiltonian Monte Carlo with mass matrix adaptation partially address this, but the fundamental tension between statistical optimality and computational tractability in the funnel region remains.

Several natural questions remain open. The exact MDP threshold $\tcrit = \sqrt{\log(\pi n/2)}$ is derived for the Cauchy local prior; for other log-pole priors with different normalisation constants~$K'$, the exact threshold would be $t_{\mathrm{crit}}' = \sqrt{\log(c' n)}$ for some constant $c' \neq \pi/2$, and characterising~$c'$ as a function of the prior's log-pole coefficient is unresolved. The super-efficiency result $O(\tau^4)$ assumes the null is $\theta_i = 0$ exactly, and the KL risk when $\theta_i$ is small but nonzero---say $\abs{\theta_i} = \tau^\alpha$ for $\alpha \in (0,1)$---remains to be characterised; the boundary between super-efficiency and standard efficiency as a function of $\abs{\theta_i}$ is not fully understood.

In the sequential testing context with e-values and the stopping rule $\tau^* = \inf\{n : E_n > \pi n/2\}$ \citep{polson2026bayes}, the question of whether the horseshoe achieves super-efficient sequential KL risk below the stopping threshold requires extending the Clarke--Barron framework to optional stopping times. The e-value threshold $\pi n/2$ carries the same constant $\pi$ as the MDP threshold, suggesting a deep connection between the horseshoe's static and sequential optimality properties that has not been formalised.

In functional estimation over Sobolev classes, the log-pole structure generalises to a coordinate-wise $\pi_k(\theta_k) \asymp -\log\abs{\theta_k}$ for each Fourier/wavelet coefficient~$k$, with per-coordinate MDP threshold $t_k = \sqrt{\log(\pi n/2) - 2s\log k}$ carrying a smoothness correction. Whether the Clarke--Barron budget extends cleanly to this smoothness-indexed setting is an open question.

A further open direction concerns high-dimensional regression. The normal means model $y_i \sim N(\theta_i, 1)$ is the canonical testing ground, but in practice the horseshoe is applied to regression coefficients $\beta \in \R^p$ with correlated design matrix~$X$. The effective observation for coefficient~$j$ is $\hat{\beta}_j^{\mathrm{OLS}} \sim N(\beta_j, (X^\top X)^{-1}_{jj})$, and the MDP framework applies coordinate-wise only when the design is orthogonal. For general designs, the off-diagonal entries of $(X^\top X)^{-1}$ introduce dependence between the effective observations, and the per-coordinate MDP threshold must be adjusted. Whether the horseshoe's log-pole continues to be the Cram\'er boundary in the correlated setting---and whether the exact threshold constant $\sqrt{\log(\pi n/2)}$ generalises to a design-dependent constant---is an open problem with direct implications for the practical deployment of horseshoe regression.

The ABOS framework connects directly to the Benjamini--Hochberg FDR control and to the broader multiple testing literature \citep{efron2004large,johnstone2004needles}. The moderate deviation threshold $\tcrit = \sqrt{2\log(n/p_0)}$ is equivalent to the BH threshold at level $\alpha = p_0/n$, connecting the Bayesian and frequentist frameworks. The horseshoe's implicit FDR control through the posterior signal probability $\hat{\pi}_i = P(\theta_i \neq 0 \mid y_i, \tau)$ provides a one-group analogue of the two-group BH procedure. This connection, made precise through the \citet{rubin1965bayes} programme, shows that the horseshoe's ABOS property is the Bayesian counterpart of BH's FDR control at the same threshold scale.

Based on the theoretical and simulation results, we recommend the following for practitioners. As a default, use a truncated half-Cauchy prior $\tau \sim C^+(0,1) \cdot \mathbf{1}[\tau \in (0,1)]$ for fully Bayesian inference: it avoids the Tiao--Tan collapse \citep{tiao1965bayesian}, achieves adaptive ABOS, and provides valid uncertainty quantification. When computational speed is paramount or $n > 10^5$, use the constrained MMLE on $[1/n, 1]$ instead. Prefer horseshoe+ over horseshoe when $p_0/n < 0.01$ (ultra-sparse regime). Avoid unconstrained MLE of~$\tau$ (collapses to zero), uniform priors on~$\tau$ when testing is the primary goal (inflates Type~I error), and uniform priors on $\tau^2$ (even more diffuse).

Finally, the connection between the horseshoe and model misspecification deserves investigation. The super-efficiency and MDP results assume the two-groups model $\theta_i = 0$ or $\theta_i \neq 0$ exactly. In practice, ``null'' coordinates may have small but nonzero effects. The horseshoe's behaviour in this nearly-black setting---where $\abs{\theta_i} = o(1)$ but $\theta_i \neq 0$---is partially addressed by the posterior concentration theory of \citet{vanderpas2014horseshoe}, but the MDP implications of approximate rather than exact sparsity remain open. Understanding how the log-pole budget is allocated when the null hypothesis holds only approximately would connect the theory to the practical setting where the horseshoe is most commonly applied.

To summarise the theoretical landscape: the Polson--Scott bounds, the super-efficiency theorem, the necessary and sufficient conditions, and the L\'evy characterisation are not four independent results about the horseshoe prior. They are four projections of a single geometric fact---that the horseshoe sits at the Cram\'er boundary of the space of scale mixture priors---onto four different mathematical coordinate systems (density, KL risk, prior conditions, and L\'evy measures). The MDP framework of \citet{datta2026newlook} is the asymptotic theory that makes this geometry visible, and the Clarke--Barron information-theoretic framework is the accounting system that tracks the resulting logarithmic budget. The horseshoe's distinctive shape---the infinite spike at zero and the heavy Cauchy tails---is the unique density profile that spends this budget optimally: zero allocation to null coordinates, full $\log n / n$ allocation to each signal coordinate, and a sharp transition at the moderate deviation threshold $\tcrit = \sqrt{\log(\pi n/2)}$ where the Bayes factor equals one.

\bibliographystyle{apalike}
\bibliography{references}

\begin{thebibliography}{}

\bibitem[Bhadra et~al., 2017]{bhadra2017horseshoe}
Bhadra, A., Datta, J., Polson, N.~G., and Willard, B. (2017).
\newblock The horseshoe+ estimator of ultra-sparse signals.
\newblock {\em Bayesian Analysis}, 12(4):1105--1131.

\bibitem[Bhattacharya et~al., 2015]{bhattacharya2015dirichlet}
Bhattacharya, A., Pati, D., Pillai, N.~S., and Dunson, D.~B. (2015).
\newblock Dirichlet--{L}aplace priors for optimal shrinkage.
\newblock {\em Journal of the American Statistical Association}, 110(512):1479--1490.

\bibitem[Bogdan et~al., 2011]{bogdan2011asymptotic}
Bogdan, M., Chakrabarti, A., Frommlet, F., and Ghosh, J.~K. (2011).
\newblock Asymptotic {B}ayes-optimality under sparsity of some multiple testing procedures.
\newblock {\em Annals of Statistics}, 39(3):1551--1579.

\bibitem[Carvalho et~al., 2009]{carvalho2009handling}
Carvalho, C.~M., Polson, N.~G., and Scott, J.~G. (2009).
\newblock Handling sparsity via the horseshoe.
\newblock In {\em Proceedings of the Twelfth International Conference on Artificial Intelligence and Statistics}, pages 73--80.

\bibitem[Carvalho et~al., 2010]{carvalho2010horseshoe}
Carvalho, C.~M., Polson, N.~G., and Scott, J.~G. (2010).
\newblock The horseshoe estimator for sparse signals.
\newblock {\em Biometrika}, 97(2):465--480.

\bibitem[Clarke and Barron, 1990]{clarke1990information}
Clarke, B.~S. and Barron, A.~R. (1990).
\newblock Information-theoretic asymptotics of {B}ayes methods.
\newblock {\em IEEE Transactions on Information Theory}, 36(3):453--471.

\bibitem[Datta and Ghosh, 2013]{datta2013asymptotic}
Datta, J. and Ghosh, J.~K. (2013).
\newblock Asymptotic properties of {B}ayes risk for the horseshoe prior.
\newblock {\em Bayesian Analysis}, 8(1):111--132.

\bibitem[Datta et~al., 2026]{datta2026newlook}
Datta, J., Polson, N.~G., Sokolov, V., and Zantedeschi, D. (2026).
\newblock A new look at {B}ayesian testing.
\newblock {\em arXiv preprint arXiv:2602.11132}.

\bibitem[Donoho and Johnstone, 1994]{donoho1994ideal}
Donoho, D.~L. and Johnstone, I.~M. (1994).
\newblock Ideal spatial adaptation by wavelet shrinkage.
\newblock {\em Biometrika}, 81(3):425--455.

\bibitem[Drton et~al., 2025]{drton2025algebraic}
Drton, M., Grosdos, A., Portakal, I., and Sturma, N. (2025).
\newblock Algebraic sparse factor analysis.
\newblock {\em SIAM Journal on Applied Algebra and Geometry}.

\bibitem[Efron, 2004]{efron2004large}
Efron, B. (2004).
\newblock Large-scale simultaneous hypothesis testing: The choice of a null hypothesis.
\newblock {\em Journal of the American Statistical Association}, 99(465):96--104.

\bibitem[Ghosh et~al., 2017]{ghosh2017asymptotic}
Ghosh, P., Tang, X., Ghosh, M., and Chakrabarti, A. (2017).
\newblock Asymptotic optimality of one-group shrinkage priors in sparse high-dimensional problems.
\newblock {\em Bayesian Analysis}, 12(4):1133--1161.

\bibitem[Johnstone and Silverman, 2004]{johnstone2004needles}
Johnstone, I.~M. and Silverman, B.~W. (2004).
\newblock Needles and straw in haystacks: Empirical {B}ayes estimates of possibly sparse sequences.
\newblock {\em Annals of Statistics}, 32(4):1594--1649.

\bibitem[Makalic and Schmidt, 2015]{makalic2015simple}
Makalic, E. and Schmidt, D.~F. (2015).
\newblock A simple sampler for the horseshoe estimator.
\newblock {\em IEEE Signal Processing Letters}, 23(1):179--182.

\bibitem[McCullagh and Polson, 2018]{mccullagh2018statistical}
McCullagh, P. and Polson, N.~G. (2018).
\newblock Statistical sparsity.
\newblock {\em Biometrika}, 105(4):797--814.

\bibitem[Mitchell and Beauchamp, 1988]{mitchell1988bayesian}
Mitchell, T.~J. and Beauchamp, J.~J. (1988).
\newblock Bayesian variable selection in linear regression.
\newblock {\em Journal of the American Statistical Association}, 83(404):1023--1032.

\bibitem[Park and Casella, 2008]{park2008bayesian}
Park, T. and Casella, G. (2008).
\newblock The {B}ayesian {L}asso.
\newblock {\em Journal of the American Statistical Association}, 103(482):681--686.

\bibitem[Piironen and Vehtari, 2017]{piironen2017sparsity}
Piironen, J. and Vehtari, A. (2017).
\newblock Sparsity information and regularization in the horseshoe and other shrinkage priors.
\newblock {\em Electronic Journal of Statistics}, 11(2):5018--5051.

\bibitem[Polson and Scott, 2010]{polson2010shrink}
Polson, N.~G. and Scott, J.~G. (2010).
\newblock Shrink globally, act locally: Sparse {B}ayesian regularization and prediction.
\newblock In {\em Bayesian Statistics 9}, pages 501--538. Oxford University Press.

\bibitem[Polson and Scott, 2012]{polson2012halfcauchy}
Polson, N.~G. and Scott, J.~G. (2012).
\newblock Half-{C}auchy priors for hierarchical models.
\newblock {\em Bayesian Analysis}, 7(4):887--902.

\bibitem[Polson et~al., 2026]{polson2026bayes}
Polson, N.~G., Sokolov, V., and Zantedeschi, D. (2026).
\newblock {B}ayes, e-values and testing.
\newblock {\em arXiv preprint arXiv:2602.04146}.

\bibitem[Rubin and Sethuraman, 1965]{rubin1965bayes}
Rubin, H. and Sethuraman, J. (1965).
\newblock {B}ayes risk efficiency.
\newblock {\em Sankhy\={a} Series A}, 27:347--356.

\bibitem[Tiao and Tan, 1965]{tiao1965bayesian}
Tiao, G.~C. and Tan, W. (1965).
\newblock {B}ayesian analysis of random-effect models in the analysis of variance.
\newblock {\em Biometrika}, 52:37--53.

\bibitem[{van der Pas} et~al., 2014]{vanderpas2014horseshoe}
{van der Pas}, S.~L., Kleijn, B. J.~K., and {van der Vaart}, A.~W. (2014).
\newblock The horseshoe estimator: Posterior concentration around nearly black vectors.
\newblock {\em Electronic Journal of Statistics}, 8(2):2585--2618.

\bibitem[{van der Pas} et~al., 2016]{vanderpas2016conditions}
{van der Pas}, S.~L., Scott, J.~G., Chakraborty, A., and Bhattacharya, A. (2016).
\newblock Conditions for posterior contraction in the sparse normal means problem.
\newblock {\em Electronic Journal of Statistics}, 10(1):976--1000.

\bibitem[{van der Pas} et~al., 2017a]{vanderpas2017adaptive}
{van der Pas}, S.~L., Szab\'o, B., and {van der Vaart}, A.~W. (2017a).
\newblock Adaptive posterior contraction rates for the horseshoe.
\newblock {\em Electronic Journal of Statistics}, 11(2):3196--3225.

\bibitem[{van der Pas} et~al., 2017b]{vanderpas2017uncertainty}
{van der Pas}, S.~L., Szab\'o, B., and {van der Vaart}, A.~W. (2017b).
\newblock Uncertainty quantification for the horseshoe (with discussion).
\newblock {\em Bayesian Analysis}, 12(4):1221--1274.

\end{thebibliography}

\end{document}